\documentclass[preprint]{elsarticle}

\usepackage{amsmath,amsthm,amssymb}

\usepackage{graphics}
\usepackage{hyperref}
\usepackage[usenames, dvipsnames]{xcolor}




\theoremstyle{plain}
\newtheorem{theorem}{Theorem}[section]
\newtheorem*{theorem*}{Theorem}
\newtheorem{lemma}[theorem]{Lemma}
\newtheorem{proposition}[theorem]{Proposition}
\newtheorem*{proposition*}{Proposition}
\newtheorem{corollary}[theorem]{Corollary}
\newtheorem*{corollary*}{Corollary}

\theoremstyle{definition}
\newtheorem{remark}[theorem]{Remark}

\numberwithin{equation}{section}

\renewcommand{\Im}{\operatorname{Im}}
\renewcommand{\Re}{\operatorname{Re}}

\DeclareMathOperator{\SL}{SL}
\DeclareMathOperator{\GL}{GL}

\begin{document}

\begin{frontmatter}

\title{Sign Changes of Coefficients and Sums of Coefficients of $L$-functions}

\author[colby]{Thomas A. Hulse\fnref{tomgrant}}
\ead{tahulse@colby.edu}

\author[maine]{Chan Ieong Kuan}
\ead{chan.i.kuan@maine.edu}

\author[brown]{David Lowry-Duda\corref{cor}\fnref{davidgrant}}
\ead{djlowry@math.brown.edu}

\author[brown]{Alexander Walker}
\ead{alexander\_walker@brown.edu}

\address[colby]{Colby College, 5830 Mayflower Hill Dr, Waterville, ME 04901}
\address[maine]{University of Maine, 5752 Neville Hall Room 333, Orono, ME 04469}
\address[brown]{Brown University, 151 Thayer Street, Box 1917, Providence, RI 02912}

\fntext[tomgrant]{Partially supported by a Coleman Postdoctoral Fellowship at Queen's University.}
\fntext[davidgrant]{Supported by the National Science Foundation Graduate Research Fellowship Program under Grant No. DGE 0228243.}

\cortext[cor]{Corresponding Author}

\begin{abstract}
  We extend the axiomatization for detecting and quantifying sign changes of Meher and Murty~\cite{MM} to sequences of complex numbers.
We further generalize this result when the sequence is comprised of the coefficients of an $L$-function.
As immediate applications, we prove that there are sign changes in intervals within sequences of coefficients of $\GL(2)$ holomorphic cusp forms, $\GL(2)$ Maass forms, and $\GL(3)$ Maass forms.
Building on~\cite{hkldw1, hkldw2}, we prove that there are sign changes in intervals within sequences of partial sums of coefficients of $\GL(2)$ holomorphic cusp forms and Maass forms.
\end{abstract}

\begin{keyword}
  Fourier coefficients of modular forms \sep{} Dirichlet series

  \MSC[2010] 11F30 \sep{} 11F03
\end{keyword}

\end{frontmatter}


\section{Introduction}

Let $L(s,f)$ be an $L$-function with associated Dirichlet series
\begin{equation*}
  L(s,f) = \sum_{n \geq 1} \frac{A_f(n)}{n^s},
\end{equation*}
for $\Re(s)$ sufficiently large and complex coefficients $\{A_f(n)\}_{n \in \mathbb{N}}$.
A complete definition of $L$-functions can be found in~\cite[Chapter 5]{iwaniec2004analytic}, but we consider only those properties associated with $L(s,f)$ having a functional equation $s \mapsto 1-s$ and sometimes with the coefficients satisfying the Ramanujan-Petersson conjecture, $A_f(n)=O(n^{\epsilon})$ for all $\epsilon>0$.
Among many other results proven in great generality, Chandrasekharan and Narasimhan~\cite{CN1, CN2} demonstrated upper bounds on partial sums of coefficients of automorphic $L$-functions,
\begin{equation} \label{sf1}
 S_f(n) := \sum_{m \leq n} A_f(m),
\end{equation}
 and gave conditions indicating when these partial sums change sign infinitely often.
Further, they identified conditions guaranteeing the existence of an $\alpha > 0$ such that, if $\Re A_f(n) \neq 0$ for some $n$, then $\Re S_f(X) = \Omega_{\pm} (X^\alpha)$, and similarly for $\Im A_f(n)$.
When applied to the coefficients $\{A_f(n)\}$ of holomorphic cusp forms of weight $k \in \frac{1}{2}\mathbb{Z}$, their method gives a lower bound of $\alpha= \frac{1}{4}$.
Hafner and Ivi\'{c}~\cite{HI} gave improved upper bounds for $S_f(n)$ and improved sign change results in the case of full-integral weight forms of level one.
While their techniques do extend to higher level, they do not easily extend to more general modular forms or half-integer weight forms.
In~\cite{HBT}, Heath-Brown and Tsang were able to show that $\Delta(n)$ and $P(n)$, denoting the respective error terms in Dirichlet's divisor problem and Gauss's circle problem, change sign for $n \in [X,X+c\sqrt{X}]$ for some constant $c$ and large $X$.
However they also showed that there are always intervals of length $\sqrt{X}\log^{-5}(X)$ inside intervals of the form $[X,2X]$ where sign changes of $\Delta(X)$ and $P(X)$ do not occur.

Sign change results for sums of coefficients imply sign change results for individual coefficients, but far more can be said about this by studying these problems directly.
Many specific cases were investigated in works such as \cite{Knopp03}, \cite{Kohnen07}, \cite{Kohnen08}, \cite{Kohnen}, \cite{HKKL}, \cite{Choie2013}, \cite{Inam2013}, \cite{Narasimha13}, \cite{Inam2}, \cite{Inam1} and \cite{MM2}. 
General sign change results for Dirichlet series with holomorphic continuation were obtained by Pribitkin~\cite{P1} and were later applied specifically to cases of automorphic forms~\cite{P2}.

More recently, Meher and Murty~\cite{MM} have shown that for half-integral weight forms with real coefficients, sequences of coefficients, $\{A_f(n)\}_{n \in \mathbb{N}}$, have at least one sign change for $n \in (X, X + X^\delta]$ for $\delta > \frac{43}{70}$ and sufficiently large $X$, giving quantitative results on the number of sign changes.
They further outline a general set of growth conditions on partial sums guaranteeing quantitative results on the number of sign changes of real-valued coefficients in a Dirichlet series.
They note that with the assumption of the Ramanujan-Petersson conjecture, they can take $\delta > \frac{3}{5}$, but actually the bound on individual coefficients due to Bykovskii~\cite{Bykovskii98}, $A_f(n)=O(n^{\frac{3}{16}+\epsilon})$ is enough to remove this assumption.
Indeed, they achieve this later in~\cite{MM2} in the case of real coefficients of $L$-functions of automorphic irreducible self-dual cuspidal representations  of $\GL_2(\mathbb{A}_{\mathbb{Q}})$.
In another work by Meher and others~\cite{MJP}, sign change results are obtained immediately from the analytic properties of automorphic $L$-functions.

In Section~\ref{sec:sign_change_statements} of this paper, we extend the Meher-Murty axiomatization guaranteeing quantitative results on sign changes to sequences of complex coefficients.
We also present a new axiomatization that gives intervals of sign changes directly from the growth of associated $L$-functions on vertical strips.
In Section~\ref{sec:individual_gl2} we continue following the path laid in~\cite{MM} and~\cite{MM2} and obtain general sign change results for coefficients of $\GL(2)$ cusp forms of full or half-integral weight that are not necessarily real.
In Section~\ref{sec:individual_gl3}, we prove generalized sign change results for the coefficients of $\GL(3)$ Maass forms.
In particular, if $\{A(m,n)\}_{m,n \in \mathbb{Z}}$ denotes the coefficients of a Maass Hecke cusp form on $\SL_3(\mathbb{Z})$, then we prove that $\{ \Re A(1,n) \}_{n \in \mathbb{N}}$ has at least one sign change for $n \in (X, X + X^{\frac{21}{23} + \epsilon}]$ for all $X$ large enough.
The method seems generalizable to the $\GL(m)$ case, and we discuss this and compare it to more results in the recent work of Meher and Murty in~\cite{MM2} and what we can obtain from the Ramanujan-Petersson Conjecture and the Generalized Lindel\"{o}f Hypothesis.

Building off of the notation in \eqref{sf1}, we let $S_f^\nu$ denote the partial sums of $\nu$-normalized coefficients associated to some object $f$,
\begin{equation} \label{sf2}
  S_f^\nu(n) := \sum_{m \leq n} \frac{B_f(m)}{m^\nu},
\end{equation}
where  $\nu \in \mathbb{R}$ and $B_f(m)$ may not necessarily be normalized to satisfy a Ramanujan-Petersson conjecture on average.
In Section~\ref{sec:sums_gl2} we investigate sign changes of $\{S_f^\nu(n)\}_{n \in \mathbb{N}}$ for $\GL(2)$ Maass forms and both full and half-integral weight holomorphic cusp forms.
This builds on the investigation in~\cite{hkldw1, hkldw2}, in which the authors initiated an investigation into the analytic properties of the three Dirichlet series,
\begin{align*}
  L(s, S_f) &:= \sum_{n \geq 1} \frac{S_f(n)}{n^s}, \\
  L(s, S_f\times S_f) := \sum_{n \geq 1} \frac{S_f(n)\overline{S_f(n)}}{n^s}, &\qquad L(s, S_f\times \overline{S_f}) := \sum_{n \geq 1} \frac{S_f(n) S_f(n) }{n^s},
\end{align*}
when $f$ was a holomorphic cusp form.
There the authors showed that each of the above series has a meromorphic continuation to $s \in \mathbb{C}$ and used this to derive improved short-interval averages for $S_f(n)$ and $S_f(n)^2$.
The bound on our sign-change interval naturally increases as the normalization $\nu$ is taken to be larger.
We show that there are sign changes in intervals even when $S_f^\nu$ is ``overnormalized'' in the sense that the size of the individual coefficients $B_f(n)n^{-\nu}$ is decreasing in $n$.
This hints at some additional structure in the size and sign changes in $\{B_f(n)\}_{n \in \mathbb{N}}$.

Beyond the examples we treat here, we expect the generalized axiomatization could be applied to various other families of sequences, as was done with the coefficients of Hilbert modular forms by Meher and Tanabe~\cite{MT}.
Furthermore, as the symmetric square $L$-function for $f$ is the key to obtaining a sign change result for the real and imaginary parts of complex numbers, it is plausible that our extension of the Meher-Murty axiomatization could be further generalized by including information about $L(s,\mbox{Sym}^nf)$, the $n$-th symmetric power $L$-function of $f$.
This would not be surprising, as the Sato-Tate conjecture for the probabilistic distribution of coefficients similarly follows from the conjectured analytic properties of $L(s,\mbox{Sym}^nf)$, which are in turn expected to follow from the Langlands program~\cite[Chapter 21]{iwaniec2004analytic}.

\subsection*{Acknowledgements}

We would like to thank Jeff Hoffstein at Brown University, who by asking a question at a talk given by Winfried Kohnen at Jeff's 61st birthday conference prompted this series of papers.
We would also like to thank M.\ Ram Murty, Jaban Meher, and Naomi Tanabe who informed the authors of this sign change axiomatization via the Queen's University Number Theory Seminar, and for some helpful conversations.

\section{Criteria for Generalized Sign Changes in Intervals}
\label{sec:sign_change_statements}

Meher and Murty~\cite{MM} identified a set of conditions guaranteeing sign changes in a sequence $\{a(n)\}_{n \in \mathbb{N}}$ of real coefficients.
Heuristically, it's clear that $a(n)$ should change signs many times if $\sum_{n \leq X} a(n)$ is small while $\sum_{n \leq X} \lvert a(n) \rvert^2$ is large.
We extend their results to sequences of complex numbers by showing that terms escape from ``wedges'' infinitely often, where we define a wedge to be the portion
\begin{equation} \label{wedge}
\mathcal{W}(\theta_1, \theta_2) = \{re^{i\theta} : r > 0, \theta \in [\theta_1, \theta_2]\}
\end{equation}
of the complex plane where $0 \leq \theta_2 - \theta_1 < \pi$.

Further, if $\sum_{n \leq X} a(n)^2$ is also small, we show that $\{\Re a(n)\}_{n \in \mathbb{N}}$ and $\{\Im a(n)\}_{n \in \mathbb{N}}$ each have many sign changes.

\begin{theorem}\label{theorem:main_sign_changes}
  Let $\{a(n)\}_{n \in \mathbb{N}}$ be a sequence of complex numbers satisfying
  \begin{enumerate}
    \item $a(n) = O(n^\alpha)$
    \item $\sum_{n \leq X} a(n) \ll X^\beta$
    \item\label{item:sum_abs_squares} $\sum_{n \leq X} \lvert a(n) \rvert^2 = c_{1} X^{\gamma_{1}} + O(X^{\eta_{1}})$,
  \end{enumerate}
  where $\alpha, \beta, c_{1}, \gamma_{1}, \eta_{1}$ are positive real constants.
   Then for any $r$ satisfying
   \begin{equation}\label{rbound1}
   \max(\alpha + \beta, \eta_{1}) - (\gamma_{1} - 1) < r < 1,
   \end{equation}
  the sequence $\{a(n)\}_{n \in \mathbb{N}}$ has at least one term outside of $\mathcal{W}(\theta_1, \theta_2)$ for some $n \in (X, X + X^r]$ for all $X \gg 1$.\\
 \\
Now also suppose that $\{a(n)\}_{n \in \mathbb{N}}$ satisfies the additional condition
  \begin{enumerate}
      \setcounter{enumi}{3}
    \item\label{item:sum_squares} $\sum_{n \leq X} a(n)^2 = c_{2} X^{\gamma_{2}} + O(X^{\eta_{2}})$,
  \end{enumerate}
  where $\gamma_{2}$ and $\eta_{2}$ are positive real constants, and $c_{2}$ is possibly complex, and that $\alpha + \beta < \max(\gamma_1, \gamma_2)$.
  Let $r$ be such that
   \begin{equation}\label{rbound2}
       \max(\alpha + \beta, \eta_{2}, \eta_{1}) - (\max(\gamma_{2}, \gamma_{1}) - 1) < r < 1.
  \end{equation}
  If $\Re(c_{2})X^{\gamma_{2}} \neq -c_{1} X^{\gamma_{1}}$ (resp.\ if $\Re(c_{2})X^{\gamma_{2}} \neq c_{1} X^{\gamma_{1}}$) then the sequence $\{\Re(a(n))\}_{n \in \mathbb{N}}$ (resp.\ $\{\Im(a(n))\}_{n \in \mathbb{N}}$) has at least one sign change in the interval $(X, X+X^r]$ for $X \gg 1$.
\end{theorem}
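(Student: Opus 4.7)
\smallskip
\noindent\textbf{Proof plan.}

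For the first assertion, I would argue by contradiction: suppose every $a(n)$ with $n\in(X,X+X^{r}]$ lies in the wedge $\mathcal{W}(\theta_{1},\theta_{2})$. Because $\theta_{2}-\theta_{1}<\pi$, one can find a phase $\phi=(\theta_{1}+\theta_{2})/2$ and a constant $\kappa=\cos\bigl((\theta_{2}-\theta_{1})/2\bigr)>0$ such that $\Re(e^{-i\phi}a(n))\geq \kappa|a(n)|$ for every $n$ in the interval. This forces
\[
\Bigl|\sum_{X<n\leq X+X^{r}} a(n)\Bigr|\;\geq\;\Bigl|\sum_{X<n\leq X+X^{r}}\Re(e^{-i\phi}a(n))\Bigr|\;\geq\;\kappa\sum_{X<n\leq X+X^{r}}|a(n)|.
\]
The plan is now to play the two hypotheses against one another: the left-hand side is a telescoping difference of partial sums and is therefore $O(X^{\beta})$ by (2), while the right-hand side is bounded below via the elementary inequality $\sum |a(n)|\geq \sum|a(n)|^{2}/\max_{n}|a(n)|$. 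Using (1) for the denominator and the asymptotic (3), together with $(X+X^{r})^{\gamma_{1}}-X^{\gamma_{1}}\asymp X^{\gamma_{1}-1+r}$ for $r<1$, I obtain $\sum|a(n)|\gg X^{\gamma_{1}-1+r-\alpha}$, provided the main term dominates the error, i.e.\ $\gamma_{1}-1+r>\eta_{1}$. Comparing exponents yields a contradiction exactly when $r$ satisfies \eqref{rbound1}.

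For the second assertion I would linearize the information about $\sum|a(n)|^{2}$ and $\sum a(n)^{2}$. Writing $x(n)=\Re a(n)$ and $y(n)=\Im a(n)$, the identities $|a(n)|^{2}=x(n)^{2}+y(n)^{2}$ and $\Re(a(n)^{2})=x(n)^{2}-y(n)^{2}$ give
\[
\sum_{n\leq X} x(n)^{2}=\tfrac{1}{2}\bigl(c_{1}X^{\gamma_{1}}+\Re(c_{2})X^{\gamma_{2}}\bigr)+O(X^{\max(\eta_{1},\eta_{2})}),
\]
and the analogous expression with a minus sign for $\sum y(n)^{2}$. The hypothesis $\Re(c_{2})X^{\gamma_{2}}\neq -c_{1}X^{\gamma_{1}}$ ensures that the leading coefficient of $\sum x(n)^{2}$ is nonzero and positive for large $X$, so $\sum_{X<n\leq X+X^{r}}x(n)^{2}\gg X^{\max(\gamma_{1},\gamma_{2})-1+r}$ once $r>\max(\eta_{1},\eta_{2})-(\max(\gamma_{1},\gamma_{2})-1)$.

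Now I repeat the wedge argument on the real axis: if $x(n)$ never changes sign on $(X,X+X^{r}]$, then $\bigl|\sum x(n)\bigr|=\sum|x(n)|\geq \sum x(n)^{2}/\max_{n}|x(n)|\gg X^{\max(\gamma_{1},\gamma_{2})-1+r-\alpha}$, while also $\bigl|\sum x(n)\bigr|=\bigl|\Re\sum a(n)\bigr|\ll X^{\beta}$ by (2). The two bounds contradict each other precisely in the range \eqref{rbound2}, and the imaginary-part statement is identical with the sign in front of $\Re(c_{2})X^{\gamma_{2}}$ reversed. The one step I expect to require care is the bookkeeping needed to show that the discrete increment $(X+X^{r})^{\gamma_{j}}-X^{\gamma_{j}}$ really dominates the error $O(X^{\eta_{j}})$ uniformly when $\gamma_{1}\neq\gamma_{2}$, since one has to verify that the term contributing the genuine asymptotic lower bound is the same one whose derivative controls the short-interval mass; the assumption $\alpha+\beta<\max(\gamma_{1},\gamma_{2})$ is what makes the window of admissible $r$ nonempty, and this is the algebraic bottleneck rather than any deep analytic input.
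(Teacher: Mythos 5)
Your proposal is correct and follows essentially the same route as the paper: the rotation-by-$\phi$ argument giving $\bigl|\sum a(n)\bigr|\geq\cos\bigl(\tfrac{\theta_2-\theta_1}{2}\bigr)\sum|a(n)|$ is exactly the paper's Lemma~\ref{lem:coslemma}, and the contradiction pitting $X^{\alpha+\beta}$ against the short-interval mass $X^{\gamma_1+r-1}$ of $\sum|a(n)|^2$, followed by the linearization $2\sum(\Re a(n))^2=\Re(c_2)X^{\gamma_2}+c_1X^{\gamma_1}+O(\cdot)$ for the second part, is the paper's proof verbatim up to rearranging the inequality $\sum|a(n)|^2\leq\max_n|a(n)|\cdot\sum|a(n)|$.
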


\begin{remark}
  Note in particular that if the $a(n)$ were all real, this is equivalent to there being at least one sign change in the interval $(X, X + X^r]$.
  The wedge result is sharp in that it cannot be improved to half-plane wedges or larger without further assumptions.
  For instance, one can consider the sequence given by $a(n) = (-1)^n + i/n^2$, which satisfies conditions (1)-(3) but never escapes the half-plane.

  When condition (4) and the subsequent hypotheses also hold, the sign-change in the real and imaginary parts is a strictly stronger result than escaping a wedge.
  Indeed, since we can replace $a(n)$ with $e^{i\phi}a(n)$ in the hypothesis of the above theorem for any $\phi \in \mathbb{R}$, it follows that we obtain sign changes for $\Re(e^{i\phi}a(n))$ and $\Im(e^{i\phi}a(n))$, which means the coefficients $a(n)$ escape any half-plane.
\end{remark}

To prove the first part of Theorem~\ref{theorem:main_sign_changes}, we will need the following lemma.

\begin{lemma}\label{lem:coslemma} Suppose that the sequence of complex numbers $\{ a(n) \}_{n=1}^{N}$ lies inside the wedge
  \begin{equation*}
    \mathcal{W}(\theta_1,\theta_2) = \left\{re^{i\theta} \ |  \ r>0, \  \theta \in [\theta_1,\theta_2] \right\}
  \end{equation*}
  where $0 \leq \theta_2-\theta_1 <\pi$.
  Then
  \begin{equation*}
    \left| \sum_{n=1}^N a(n) \right| \geq \cos\left(\frac{\theta_2-\theta_1}{2}\right) \sum_{n=1}^N |a(n)|.
  \end{equation*}
\end{lemma}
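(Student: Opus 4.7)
The plan is to reduce the problem to a statement about real parts by rotating the wedge so that it is symmetric about the positive real axis. Let $\phi = (\theta_1 + \theta_2)/2$ denote the bisecting angle of $\mathcal{W}(\theta_1,\theta_2)$, and set $b(n) := e^{-i\phi} a(n)$. Since multiplication by a unimodular complex number preserves both $\left|\sum b(n)\right| = \left|\sum a(n)\right|$ and $|b(n)| = |a(n)|$, it suffices to prove the inequality for the rotated sequence $\{b(n)\}$, which now lies in the wedge $\mathcal{W}(-\tfrac{\theta_2-\theta_1}{2}, \tfrac{\theta_2-\theta_1}{2})$.

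Writing $b(n) = r_n e^{i\psi_n}$ with $r_n = |a(n)|$ and $|\psi_n| \leq \tfrac{\theta_2-\theta_1}{2} < \tfrac{\pi}{2}$, I would use the monotonicity of cosine on $[0,\pi/2)$ to conclude that
\begin{equation*}
  \Re b(n) = r_n \cos(\psi_n) \geq r_n \cos\!\left(\frac{\theta_2-\theta_1}{2}\right) = \cos\!\left(\frac{\theta_2-\theta_1}{2}\right)|a(n)|.
\end{equation*}
Summing over $n$ gives $\sum_{n=1}^N \Re b(n) \geq \cos\!\left(\tfrac{\theta_2-\theta_1}{2}\right) \sum_{n=1}^N |a(n)|$.

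Finally, I would invoke the elementary bound $\left|\sum b(n)\right| \geq \Re \sum b(n) = \sum \Re b(n)$, and combine it with the previous display to obtain
\begin{equation*}
  \left|\sum_{n=1}^N a(n)\right| = \left|\sum_{n=1}^N b(n)\right| \geq \cos\!\left(\frac{\theta_2-\theta_1}{2}\right)\sum_{n=1}^N |a(n)|,
\end{equation*}
which is the desired inequality. There is no serious obstacle here; the only subtlety is that the hypothesis $\theta_2 - \theta_1 < \pi$ is exactly what guarantees $(\theta_2-\theta_1)/2 < \pi/2$, keeping the cosine factor strictly positive and making the rotated coefficients lie strictly to the right of the imaginary axis so that the bound by real parts is nontrivial.
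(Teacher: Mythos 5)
Your proof is correct and is essentially identical to the paper's: both rotate the wedge to be symmetric about the positive real axis, bound $\Re$ of each term below by $\cos\left(\frac{\theta_2-\theta_1}{2}\right)$ times its modulus, and finish with $\left|\sum b(n)\right| \geq \sum \Re b(n)$. No differences worth noting.
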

\begin{proof}
  Since $e^{i\phi}\mathcal{W}(\theta_1,\theta_2) = \mathcal{W}(\theta_1+\phi,\theta_2+\phi)$,	we suppose without loss of generality that $\{a(n)\}$ lies in the wedge $\mathcal{W}(-\phi,\phi)$, where $\phi=\frac{\theta_2-\theta_1}{2} \in [0,\frac{\pi}{2})$.
  Observe that $a(n) = e^{i\theta}|a(n)|$ for some $\theta \in [-\phi,\phi]$ and so
  \begin{equation*}
    \Re(a(n)) =\cos(\theta)|a(n)| \geq \cos(\phi)|a(n)|.
  \end{equation*}
  Since for any $z \in \mathbb{C}$, $|z| \geq |\Re(z)|$, and since $\Re a(n) \geq 0$, we have that
  \begin{equation*}
    \left| \sum_{n=1}^N a(n) \right| \geq \sum_{n=1}^N \Re(a(n)) \geq \cos(\phi) \sum_{n=1}^N |a(n)|,
  \end{equation*}
  which completes the proof of the lemma.
\end{proof}

\begin{proof}[Proof of Theorem~\ref{theorem:main_sign_changes}]
  We first show the statement about wedges.
  Fix $\theta_1,\theta_2$ such that $0\leq \theta_2 - \theta_1 < \pi$.
  Assume the conditions (1)-(3) of $\{a(n)\}$ and suppose for the sake of obtaining a contradiction that the claim is false, so that we have an $r$ satisfying \eqref{rbound1}, but $a(n) \in \mathcal{W}(\theta_1,\theta_2)$ for all $n \in (X, X+X^r]$.
  Then by condition (1), Lemma~\ref{lem:coslemma}, and the fact that $r<1$, we have
  \begin{equation*}
    \sum_{X < n < X+X^r} |a(n)|^2 \ll X^\alpha \sum_{X < n < X + X^r} |a(n)| \ll \frac{X^{\alpha + \beta}}{\cos(\tfrac{\theta_2-\theta_1}{2})}.
  \end{equation*}
  As $\theta_2 - \theta_1 < \pi$, the cosine term is a fixed positive constant.
  Since $r+\gamma_{1}-1 > \eta_{1}$,
  \begin{equation*}
    \sum_{X < n < X + X^r} |a(n)|^2  = c_1(X+X^r)^{\gamma_{1}}-c_1X^{\gamma_{1}} +O(X^{\eta_1}) \gg cX^{r+\gamma_{1}-1},
  \end{equation*}
  by the binomial theorem.
  As $X$ gets large, it follows that $\alpha+\beta \geq \gamma_{1}+r-1$, contradicting the inequality for $r$.

  Supposing now that $\{a(n)\}_{n \in \mathbb{N}}$ also satisfies condition~\eqref{item:sum_squares}, we prove the second half of the theorem.
  Note that $a(n)^2 = \Re a(n)^2 - \Im a(n)^2 + 2i\Re a(n)\Im a(n)$ and $\lvert a(n) \rvert^2 = \Re a(n)^2 + \Im a(n)^2$.
  We can therefore rewrite conditions~\eqref{item:sum_abs_squares} and~\eqref{item:sum_squares} as
  \begin{align*}
  &    \sum_{n \leq X} \Re a(n)^2 + \Im a(n)^2 = c_{1} X^{\gamma_{1}} + O(X^{\eta_{1}}), \\
  & \sum_{n \leq X} \Re a(n)^2 - \Im a(n)^2 + 2i\Re a(n)\Im a(n) = c_{2} X^{\gamma_{2}} + O(X^{\eta_{2}}),
  \end{align*}
  respectively.
  Take real parts and add these together to get
  \begin{equation*}
    2\sum_{n \leq X} \Re(a(n))^2 = \Re(c_{2})X^{\gamma_{2}} + c_{1} X^{\gamma_{1}} + O(X^{\eta_{2}}+X^{\eta_{1}}).
  \end{equation*}
Suppose without loss of generality that $\Re a(n) \geq 0$ for $n \in (X, X+X^r]$ for some $r$ satisfying \eqref{rbound2}.
  Then on one hand,
  \begin{equation*}
    2\sum_{X < n < X + X^r} (\Re a(n))^2 \ll X^\alpha \sum_{X < n < X + X^r} \Re a(n) \ll X^{\alpha + \beta}.
  \end{equation*}
  On the other hand,
  \begin{align*}
    2\sum_{X < n < X + X^r} (\Re a(n))^2 &= \Re(c_{2})\left((X + X^r)^{\gamma_{2}} - X^{\gamma_{2}} \right) +O(X^{\eta_{2}}) \\[-2ex]
    &\quad+ c_{1}\left( (X+X^r)^{\gamma_{1}} - X^{\gamma_{1}}\right) + O(X^{\eta_{1}}) \\
    &= \Re(c_{2})X^{\gamma_{2} + r - 1} + O(X^{\eta_{2}} + X^{\gamma_{2} + 2(r-1)}) \\
    &\quad+ c_{1} X^{\gamma_{1} + r - 1} + O(X^{\eta_{1}} + X^{\gamma_{1} + 2(r-1)}).
  \end{align*}
 Unless $\Re(c_2)X^{\gamma_2}+c_1X^{\gamma_1}=0$, it follows that
  \begin{equation*}
    X^{\gamma_{2} + r - 1} + X^{\gamma_{1} + r - 1} = O(X^{\alpha + \beta} + X^{\eta_{2}} + X^{\eta_{1}}).
  \end{equation*}
 But $\max(\gamma_{2}, \gamma_{1}) + r - 1 > \max(\alpha + \beta, \eta_{2}, \eta_{1})$, so we obtain a contradiction.
 A similar argument holds for the imaginary parts, using $\Re(c_{2})X^{\gamma_{2}} \neq c_{1} X^{\gamma_{1}}$.
\end{proof}

In practice, we often analyze Dirichlet series
and perform a series of cutoff integral transforms to obtain the growth conditions necessary for Theorem~\ref{theorem:main_sign_changes} to hold.
It is natural to want to translate directly from the analytic behavior to sign changes in intervals, and in Theorem~\ref{theorem:signchanges_analytic} we obtain a result to this end.

\begin{theorem}\label{theorem:signchanges_analytic}
  Let $\{a(n)\}_{n \in \mathbb{N}}$ be a sequence of complex numbers satisfying the following:
  \begin{enumerate}
    \item $a(n) = O(n^\alpha)$
    \item The continuation of the Dirichlet series $\sum_{n \geq 1} a(n) n^{-s}$ is entire, and at $\Re s = \sigma_0$ is $O( (1 + \lvert \Im s \rvert)^\beta)$
    \item The continuation of the Dirichlet series $\sum_{n \geq 1} \lvert a(n) \rvert^2 n^{-s}$ is holomorphic for $\Re s \geq \sigma_1$ except for a simple pole at $s = \gamma_{1}$ with residue $R_1$. Further, at $\Re s = \sigma_1$, this Dirichlet series is $O((1 + \lvert \Im s \rvert)^{\eta_{1}})$,
  \end{enumerate}
  in which $\alpha, \beta, \gamma_{1}, \eta_{1}$, and $R_1$ are all real constants.
  Fix a wedge $\mathcal{W}(\theta_1, \theta_2)$ as in \eqref{wedge}.
  Then the sequence $\{a(n)\}_{n \in \mathbb{N}}$ has at least one term outside $\mathcal{W}(\theta_1, \theta_2)$ for some $n \in (X, X + X^r]$ for any $r$ satisfying
  \begin{equation}\label{rbound3}
  1 +\max \left\{ \frac{\sigma_1 - \gamma_{1}}{1 + \eta_{1}}, \frac{\alpha + \sigma_0 - \gamma_{1}}{1 + \beta}\right\}< r < 1
  \end{equation} and $X$ sufficiently large.\\
\\
  Further, suppose $\{a(n)\}_{n \in \mathbb{N}}$ also satisfies:
  \begin{enumerate}
    \setcounter{enumi}{3}
    \item The Dirichlet series $\sum_{n \geq 1} a(n)^2 n^{-s}$ is holomorphic for $\Re s \geq \sigma_2$ except for a simple pole at $s = \gamma_{2}$ with residue $R_2$. Further, at $\Re s = \sigma_2$, this Dirichlet series is $O((1 + \lvert \Im s \rvert)^{\eta_{2}})$,
  \end{enumerate}
  in which $\gamma_{2},\eta_2$ are real and $R_2$ is possibly complex.
  Define $\gamma := \max(\gamma_{1}, \gamma_{2})$.
  If $\Re(R_2)X^{\gamma_{2}} \neq -R_1 X^{\gamma_{1}}$ and $\Re(R_2) X^{\gamma_{2}} \neq R_1 X^{\gamma_{1}}$, then the sequences $\{\Re a(n)\}_{n \in \mathbb{N}}$ and $\{\Im a(n) \}_{n \in \mathbb{N}}$ each have at least one sign change for some $n \in (X, X + X^r]$ for any $r$ satisfying
  \begin{equation} \label{rbound4}
  1 + \max\left\{ \frac{\sigma_1 - \gamma}{1 + \eta_{1}}, \frac{\sigma_2 - \gamma}{1 + \eta_{2}}, \frac{\alpha + \sigma_0 - \gamma}{1 + \beta} \right\}< r < 1,
  \end{equation}
   for $X \gg 1$.
\end{theorem}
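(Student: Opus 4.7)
The plan is to reduce Theorem~\ref{theorem:signchanges_analytic} to Theorem~\ref{theorem:main_sign_changes} by converting the analytic hypotheses into the moment-growth conditions (1)--(4) of the latter. The route is standard: a truncated Perron formula, a contour shift to the line of controlled growth, and Phragm\'en--Lindel\"of interpolation to bound the horizontal pieces, with the truncation height $T$ optimized at the end to balance errors.

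Concretely, since $a(n) = O(n^\alpha)$, each of the Dirichlet series $D(s) = \sum a(n) n^{-s}$, $D_1(s) = \sum \lvert a(n)\rvert^2 n^{-s}$, and $D_2(s) = \sum a(n)^2 n^{-s}$ converges absolutely in a right half-plane. Fixing a suitable $c$ just to the right of the abscissa of absolute convergence, the truncated Perron formula yields
\begin{equation*}
  \sum_{n \leq X} a(n) = \frac{1}{2\pi i}\int_{c-iT}^{c+iT} D(s)\,\frac{X^s}{s}\,ds + O\!\left(\frac{X^c}{T} + X^{\alpha+\epsilon}\right),
\end{equation*}
and analogous expressions for $D_1, D_2$. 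I would then shift the contour leftward to $\Re s = \sigma_0$ (respectively $\sigma_1$, $\sigma_2$). For $D$ no residues are crossed (beyond a harmless constant $D(0)$ if $\sigma_0 \le 0$); for $D_j$ the shift crosses the simple pole at $\gamma_j$ and contributes the residue $R_j X^{\gamma_j}/\gamma_j$, which becomes the main term in conditions~(3)--(4) of Theorem~\ref{theorem:main_sign_changes}.

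For the horizontal segments at height $\pm T$, Phragm\'en--Lindel\"of interpolates between the assumed polynomial growth on $\Re s = \sigma_j$ and the trivial bound on $\Re s = c$ (possibly after subtracting $R_j/(s - \gamma_j)$ from $D_j$ to keep the interpolation away from the pole). Together with the vertical integral at $\Re s = \sigma_j$, this produces an error of shape $X^{\sigma_j} T^{\eta_j} + X^c/T$, which is minimized by the choice $T = X^{(c - \sigma_j)/(1 + \eta_j)}$. The resulting error exponent, combined with the bound $c \to \gamma_j + \epsilon$ from the location of the pole, is precisely what is needed to make the ratios $(\sigma_j - \gamma_j)/(1 + \eta_j)$ appear; an entirely analogous optimization for $D$ produces $(\alpha + \sigma_0 - \gamma)/(1 + \beta)$.

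Once these three estimates are in hand, the hypotheses of Theorem~\ref{theorem:main_sign_changes} are met with the constants $c_j = R_j/\gamma_j$, and the non-degeneracy hypothesis $\Re(R_2) X^{\gamma_2} \neq \pm R_1 X^{\gamma_1}$ is exactly the non-degeneracy required there (up to the $\gamma_j$ factor, which does not affect it). Substituting into~\eqref{rbound1} and~\eqref{rbound2} recovers~\eqref{rbound3} and~\eqref{rbound4}. The main technical obstacle is the careful bookkeeping in the contour shift: one must handle the Perron truncation error, the Phragm\'en--Lindel\"of interpolation, and the $T$-optimization in such a way that the exponents match exactly the ratios in~\eqref{rbound3}--\eqref{rbound4}, which in particular forces the choice of $c$ near the pole rather than near the abscissa of absolute convergence.
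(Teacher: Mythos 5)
There is a genuine gap: the reduction to Theorem~\ref{theorem:main_sign_changes} cannot recover the exponents in~\eqref{rbound3} and~\eqref{rbound4}, and the final assertion that ``substituting into~\eqref{rbound1} and~\eqref{rbound2} recovers~\eqref{rbound3} and~\eqref{rbound4}'' is false. The obstruction is the term $\alpha+\beta$ in~\eqref{rbound1}: Theorem~\ref{theorem:main_sign_changes} controls the short sum $\sum_{X<n\le X+X^r}|a(n)|$ only by the triangle inequality applied to two \emph{long} partial sums, so whatever exponent $\beta'$ you extract from Perron plus Phragm\'en--Lindel\"of enters as $\alpha+\beta'$. From hypothesis (2) the best long-sum bound available is $\beta'=\frac{\sigma_0+c\beta}{1+\beta}$ with $c$ at the abscissa of absolute convergence of $\sum a(n)n^{-s}$ (roughly $1+\alpha$, not $\gamma_1+\epsilon$ --- this series has no pole to sit next to). To match~\eqref{rbound3} you would instead need $\beta'=\frac{\sigma_0+(\gamma_1-\alpha)\beta}{1+\beta}$, which is strictly smaller whenever $\gamma_1<1+2\alpha$. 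Concretely, in the $\GL(3)$ application ($\alpha=\tfrac{5}{14}$, $\sigma_0=0$, $\beta=\tfrac32$, $\gamma_1=1$) your route gives $\alpha+\beta'=\tfrac{41}{35}>1$, i.e.\ no sign-change interval at all, whereas~\eqref{rbound3} gives $r>\tfrac{21}{23}$. Indeed the paper remarks explicitly that Theorem~\ref{theorem:signchanges_analytic} is \emph{stronger} than what the Meher--Murty axiomatization yields, so it cannot be a corollary of it.

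The missing idea is that the balancing must happen on the short interval itself. The paper's proof takes a smooth cutoff $v_Y$ supported on $[1,1+\tfrac1Y]$ with Mellin transform $V_Y$ satisfying $V_Y(s)\asymp\tfrac1Y$ and decaying once $|s|\gg Y$; assuming no escape from the wedge on $(X,X+\tfrac XY]$, one compares
\begin{equation*}
  \sum_{n} a(n)^2 v_Y\!\left(\tfrac nX\right)\ \ll\ X^{\alpha}\sum_n a(n)v_Y\!\left(\tfrac nX\right)\ \ll\ X^{\alpha}\,Y^{\beta+\epsilon}X^{\sigma_0}
\end{equation*}
(the effective truncation height is $Y$, so there is no $X^c/T$ loss and no long-sum exponent appears) against the smoothed asymptotic $V_Y(\gamma_1)R_1X^{\gamma_1}+O(Y^{\eta_1+\epsilon}X^{\sigma_1})$ with main term $\asymp X^{\gamma_1}/Y$. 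Solving $X^{\gamma_1}/Y\ll Y^{\eta_1+\epsilon}X^{\sigma_1}+Y^{\beta+\epsilon}X^{\alpha+\sigma_0}$ for $Y=X^{1-r}$ is exactly what produces both ratios in~\eqref{rbound3}. Your identification of where $\frac{\sigma_1-\gamma_1}{1+\eta_1}$ comes from is essentially right, but the companion ratio $\frac{\alpha+\sigma_0-\gamma_1}{1+\beta}$ is intrinsically a short-interval bound and cannot be routed through condition (2) of Theorem~\ref{theorem:main_sign_changes}. (A secondary issue: even for the moments $D_1,D_2$, a sharp-cutoff Perron truncation error of size $X^{1+2\alpha}/T$ would swamp the needed error term at the optimal $T$, so smoothing is needed there as well.)
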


\begin{proof}
  For ease of description, suppose that $a(n)$ is real.
  For $X, Y > 0$, let $v_Y(X)$ denote a smooth nonnegative cutoff function with maximum value $1$, compactly supported in the interval $[1, 1 + \frac{1}{Y}]$, and satisfying $v_Y(X) = 1$ for $1 + \frac{1}{4Y} \leq X \leq 1 + \frac{3}{4Y}$.
  Let $V_Y(s)$ denote the corresponding Mellin transform of $v_Y(x)$.
  The Mellin pair $v_Y(X)$ and $V_Y(s)$ are very similar to the pair we will discuss in greater detail in Section~\ref{sec:analytic_bounds}, except $v_Y(X)$ is supported on a smaller interval and, correspondingly, $V_Y(s)$ has no pole.
  To obtain a contradiction, suppose that $a(n) > 0$ for all $n \in (X, X + \frac{X}{Y}]$, then
  \begin{equation*}
    \sum_{n \geq 1} a(n)^2 v_Y\left(\frac{n}{X}\right) \ll X^\alpha \sum_{n \geq 1} a(n) v_Y\left(\frac{n}{X}\right) = \frac{X^\alpha}{2\pi i} \int_{(\sigma)} \sum_{n \geq 1} \frac{a(n)}{n^s} X^s V_Y(s)ds
  \end{equation*}
for sufficiently large $\sigma$.
Moving the line of integration to $\Re s = \sigma_0$ and bounding $V_Y(s)$ using integration by parts (analogous to the bound \eqref{vbound} given in Section~\ref{sec:analytic_bounds}) we get that
  \begin{equation}\label{eq:smoothed_L1}
    \sum_{n \geq 1} a(n)^2 v_Y\left(\frac{n}{X}\right) \ll Y^{\beta + \epsilon}X^{\alpha+\sigma_0}.
  \end{equation}
  Similar calculations give the smoothed asymptotic,
  \begin{equation}\label{eq:smoothed_L2}
    \sum_{n \geq 1} a(n)^2 v_Y(\frac{n}{X}) = V_Y(\gamma_{1}) R_1 X^{\gamma_{1}} + O(Y^{\eta_{1} + \epsilon}X^{\sigma_1}).
  \end{equation}
  Combining \eqref{eq:smoothed_L1} and \eqref{eq:smoothed_L2}, and noting that $V_Y(s) \asymp \frac{1}{Y}$, following the notation of~\cite{iwaniec2004analytic}, we get that
  \begin{equation*}
    \frac{X^{\gamma_{1}}}{Y} = O(Y^{\eta_{1} + \epsilon}X^{\sigma_1} + Y^{\beta + \epsilon}X^{\alpha + \sigma_0}),
  \end{equation*}
  which implies that
  \begin{equation*}
    Y \gg X^{\frac{\gamma_{1} - \sigma_1}{1 + \eta_{1}+\epsilon}} \quad \text{or} \quad Y \gg X^{\frac{\gamma_{1} - \sigma_0 - \alpha}{1 + \beta+\epsilon}}.
  \end{equation*}
So if there is no sign change in the interval $(X, X + \frac{X}{Y}]$ then
  \begin{equation*}
    \frac{X}{Y} \ll X^{1 + \max\{\frac{\sigma_1 - \gamma_{1}}{1 + \eta_{1}+\epsilon}, \frac{\alpha + \sigma_0 - \gamma_{1}}{1 + \beta+\epsilon}\}}.
  \end{equation*}
  For any $r$ satisfying~\eqref{rbound3}, we can take $\epsilon>0$ to be small enough such that the contrapositive completes the proof for the case when $a(n)$ are real, and the other cases follow as in the proof of Theorem~\ref{theorem:main_sign_changes}.
\end{proof}

\section{Individual Coefficients of $\GL(2)$ Cusp Forms}
\label{sec:individual_gl2}

As a first application, we prove general sign change results for the (possibly non-real) coefficients of cusp forms on $\GL(2)$, particularly for the coefficients of weight zero Maass forms and holomorphic cusp forms of integral and half-integral weight.
The argument in this section takes heavily from the argument for sign changes of half-integral weight cusp forms in~\cite{MM} but makes use of our generalization of their axiomatization.

For this section and subsequent sections, we will use~\cite[Theorem~4.1]{CN1} and~\cite[Theorem~1]{CN2} repeatedly in the following form.

\begin{theorem}[Chandrasekharan and Narasimhan, 1962 and 1964] \label{theorem:CN}
  Let $f$ and $g$ be objects with meromorphic Dirichlet series
  \begin{equation*}
    L(s,f) = \sum_{n \geq 1} \frac{a(n)}{n^s}, \qquad L(s,g)=\sum_{n\geq 1} \frac{b(n)}{n^s}.
  \end{equation*}
  Suppose $G(s) = Q^s\prod_{i=1}^\ell \Gamma(\alpha_i s + \beta_i)$ is a product of gamma factors with $Q>0$ and $\alpha_i > 0$.
  Define $A = \sum_{i=1}^\ell \alpha_i$.
  Let $w$ and $w'$ be numbers such that $\sum_{n \leq X} \lvert b(n) \rvert^2 \ll X^{2w - 1} \log^{w'} X$.
  Let
  \begin{equation*}
    Q(X) = \frac{1}{2\pi i} \int_{\mathcal{C}} \frac{L(s,f)}{s} X^s \ ds,
  \end{equation*}
  where $\mathcal{C}$ is a smooth closed contour enclosing all the singularities of the integrand.
  Let $q$ be the maximum of the real parts of the singularities of $L(s,f)$ and let $r$ be the maximum order of a pole of $L(s,f)$ with real part $q$.
  Suppose the functional equation
  \begin{equation}\label{eq:basic_feq}
    G(s)L(s,f) = \epsilon(f) G(\delta-s)L(\delta-s,g)
  \end{equation}
  is satisfied for some $\lvert \epsilon(f) \rvert = 1$ and $\delta >0$.
  Then we have that
  \begin{equation}
    \begin{split}\label{eq:CN_L1}
      S_f(X) =& \sum_{n\leq X} a(n) =
      Q(X) + O(X^{\frac{\delta}{2}-\frac{1}{4A}+2A(w-\frac{\delta}{2}-\frac{1}{4A})\eta + \epsilon}) \\
      & \quad + O(X^{q-\frac{1}{2A}-\eta}\log(X)^{r-1})+O\bigg( \sum_{X \leq n \leq X'} |a(n)| \bigg)
    \end{split}
  \end{equation}
  for any $\eta \geq 0$, and where $X' =X+O(X^{1-\frac{1}{2A}-\eta})$.
  If all $a(n) \geq 0$, the final $O$-error term above does not contribute.

  Suppose further that $A \geq 1$ and that $2w - \delta - \frac{1}{A} \leq 0$.
  Then
  \begin{equation*}
    \sum_{n \leq X} \lvert S_f(n) - Q(n) \rvert^2 = cX^{\delta - \frac{1}{2A} + 1} + O(X^{\delta} \log^{w' + 2} X)
  \end{equation*}
  for a constant $c$ that can be made explicit.
\end{theorem}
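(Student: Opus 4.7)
The plan is to follow the classical Perron-plus-functional-equation approach of Chandrasekharan and Narasimhan. I would start from the truncated Perron formula
\[
S_f(X) = \frac{1}{2\pi i}\int_{c-iT}^{c+iT} L(s,f)\frac{X^s}{s}\,ds + O\!\Big(\sum_{|n-X|\ll X/T} |a(n)| + \frac{X^c}{T}\Big),
\]
with $c$ to the right of all singularities, and then shift the contour to the left, past all poles of $L(s,f)/s$ (whose residues add up precisely to $Q(X)$) and onto a vertical line that is close enough to the critical strip to make the functional equation useful. The horizontal pieces at height $T$ are controlled by convexity bounds, and the main error terms are produced by the new vertical integral and by the Perron truncation.

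On the shifted line, the identity $L(s,f) = \epsilon(f)\, G(\delta-s)G(s)^{-1} L(\delta-s,g)$ lets me replace the integrand; by Stirling, the gamma-factor ratio grows polynomially in $|\Im s|$ at a rate controlled by $A = \sum \alpha_i$. Expanding $L(\delta-s,g)$ as a Dirichlet series and interchanging sum and integral turns the integral into a Bessel-type kernel transform of the $b(n)$. Bounding the resulting series via Cauchy--Schwarz against the hypothesis $\sum_{n \leq N}|b(n)|^2 \ll N^{2w-1}(\log N)^{w'}$, and then optimizing the truncation height $T$ against the free parameter $\eta$, produces the error $O\!\big(X^{\delta/2 - 1/(4A) + 2A(w-\delta/2 - 1/(4A))\eta + \epsilon}\big)$. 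The intermediate term $O(X^{q-1/(2A)-\eta}(\log X)^{r-1})$ comes from the contribution of the line $\Re s = q - 1/(2A) - \eta$ where the pole of order $r$ at $s=q$ of $L(s,f)$ contributes its characteristic logarithmic power, and the sum $\sum_{X \leq n \leq X'}|a(n)|$ with $X'=X+O(X^{1-1/(2A)-\eta})$ absorbs the Perron tail.

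For the mean-square estimate I would use the Voronoi-type expansion for $S_f(n) - Q(n)$ coming from the shifted contour integral, written explicitly as a sum over $b(m)$ weighted by Bessel kernels, then square and sum over $n \leq X$. Opening the square and swapping the order of summation, the diagonal terms (after invoking the mean-square hypothesis on $b(m)$) produce the claimed main term $cX^{\delta - 1/(2A) + 1}$, while the off-diagonal terms are controlled by oscillation estimates for the Bessel kernels. The assumption $A \geq 1$ is exactly what is required to get nontrivial oscillation from the kernels, and the hypothesis $2w - \delta - 1/A \leq 0$ forces the relevant diagonal series in $|b(m)|^2$ to be small enough that the main term emerges cleanly rather than being overwhelmed.

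The main obstacle in the first part will be the bookkeeping: tracking the interplay of $T$, the contour position, and the free parameter $\eta$, along with the (possibly complex) residue data at the poles of $L(s,f)/s$, so that the stated hybrid error exponent drops out. In the mean-square step the subtlety is showing that the off-diagonal contributions in the squared Voronoi sum really contract to $O(X^{\delta}(\log X)^{w'+2})$ rather than merely matching the diagonal; this is where the two auxiliary inequalities $A \geq 1$ and $2w - \delta - 1/A \leq 0$ carry the argument, and correctly extracting the explicit constant $c$ in the main term requires evaluating a specific convergent series in the $b(m)$'s that I would leave implicit in the proof sketch.
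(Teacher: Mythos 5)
This theorem is not proved in the paper at all: it is quoted directly from Chandrasekharan and Narasimhan~\cite{CN1, CN2} (specifically~\cite[Theorem~4.1]{CN1} for the first estimate and~\cite[Theorem~1]{CN2} for the mean-square), so there is no in-paper argument to compare your proposal against. Judged against the original Chandrasekharan--Narasimhan proof, your sketch has the right global skeleton --- functional equation plus Stirling to control the ratio of gamma factors via $A=\sum\alpha_i$, a Bessel-kernel (Voronoi-type) expansion in the $b(n)$, Cauchy--Schwarz against the hypothesis $\sum_{n\le N}|b(n)|^2\ll N^{2w-1}\log^{w'}N$, and a diagonal/off-diagonal split for the second moment --- but it departs from their route in one structural way. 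Chandrasekharan and Narasimhan do not run a truncated Perron formula directly at $\rho=0$; they establish the identity for Riesz typical means $A_\rho(x)=\frac{1}{\Gamma(\rho+1)}\sum_{n\le x}a(n)(x-n)^\rho$ for $\rho$ large enough that the Bessel series converges absolutely, and then descend to the sharp sum by finite differencing over an interval of length $X^{1-\frac{1}{2A}-\eta}$. That differencing step is precisely what produces both the free parameter $\eta$ and the term $O\bigl(\sum_{X\le n\le X'}|a(n)|\bigr)$ with $X'=X+O(X^{1-\frac{1}{2A}-\eta})$; your Perron-truncation term $\sum_{|n-X|\ll X/T}|a(n)|$ plays a morally similar role but would require you to justify the contour shift and the convexity bounds on the horizontal segments, which is exactly the convergence issue the Riesz-mean device is designed to avoid.

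There is also one concrete misattribution: the term $O(X^{q-\frac{1}{2A}-\eta}\log^{r-1}X)$ does not come from a vertical line at $\Re s=q-\frac{1}{2A}-\eta$. It arises because the main term $Q(x)$ contains contributions of size $x^q\log^{r-1}x$ from the order-$r$ pole at $s=q$, and differencing $Q$ over the interval of length $x^{1-\frac{1}{2A}-\eta}$ contributes roughly $Q'(x)\cdot x^{1-\frac{1}{2A}-\eta}\asymp x^{q-\frac{1}{2A}-\eta}\log^{r-1}x$. This is a fixable bookkeeping error rather than a fatal gap, but as written your sketch would not reproduce that exponent. The second-moment half of your outline is consistent with~\cite{CN2}: the diagonal of the squared Bessel expansion gives $cX^{\delta-\frac{1}{2A}+1}$ and the conditions $A\ge 1$ and $2w-\delta-\frac{1}{A}\le 0$ are what make the off-diagonal and tail contributions subordinate, though of course the hard analytic work (the oscillation estimates for the kernels) is left entirely implicit.
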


Now we use this theorem to deduce bounds necessary to apply Theorem~\ref{theorem:main_sign_changes} to individual coefficients of cusp forms on $\GL(2)$.

\begin{corollary} \label{cor:adj}
  Let $f$ be either a weight zero Maass form with eigenvalue $\lambda_j = \frac{1}{4} + t_j^2$ and Fourier expansion
  \begin{equation}\label{eq:maass_expansion}
    f(z) = \sum_{n \neq 0} A_f(n) \sqrt{y} K_{it_j}(2\pi i \lvert y \rvert n) e^{2\pi i n x},
  \end{equation}
  or a holomorphic cusp form of full or half integer weight with Fourier expansion
  \begin{equation}\label{eq:holomorphic_expansion}
    f(z) = \sum_{n \geq 1} A_f(n) n^{\frac{k-1}{2}} e^{2\pi i n z}.
  \end{equation}
  In any of the above cases $f$ may be of level $N$ and possibly with nontrivial nebentypus.
  Then
  \begin{equation}\label{rankin1}
    \sum_{n\leq X} |A_f(n)|^2 = c_fX+O(X^{\frac{3}{5}+\epsilon})
  \end{equation}
  for some effectively computable constant $c_f$.
\end{corollary}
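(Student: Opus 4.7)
The plan is to apply Theorem~\ref{theorem:CN} to the nonnegative sequence $a(n) = |A_f(n)|^2$, viewed as the Dirichlet coefficients of the Rankin--Selberg convolution
\[
L(s, f \times \bar{f}) = \sum_{n \geq 1} \frac{|A_f(n)|^2}{n^s}.
\]
In each of the three cases (weight-zero Maass, integral-weight holomorphic, half-integral-weight holomorphic) this is a degree-four $L$-function with meromorphic continuation, a functional equation of type $s \mapsto 1-s$, and a simple pole at $s = 1$ with residue $c_f > 0$ (essentially the Petersson norm of $f$). For the half-integral case one accesses these properties through Shimura's construction of the symmetric square together with the Shimura lift. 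In every case the archimedean gamma factor yields $A = 2$ in the notation of Theorem~\ref{theorem:CN}, with $\delta = 1$, $q = 1$, $r = 1$; consequently the residue integral reduces to $Q(X) = c_f X + O(1)$.

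To invoke Theorem~\ref{theorem:CN} I also need $w$ with $\sum_{n \leq X}|A_f(n)|^4 \ll X^{2w-1}\log^{w'}X$, where $|A_f(n)|^2$ now plays the role of the coefficients of the (essentially self-dual) $L(s,g)$ appearing in the functional equation. For full-integer holomorphic forms this is immediate from Deligne's bound $|A_f(n)| \ll n^\epsilon$. For Maass and half-integral-weight forms the bound $\sum_{n \leq X}|A_f(n)|^4 \ll X^{1+\epsilon}$ follows from the analytic properties of symmetric-square and related Rankin--Selberg $L$-functions (Gelbart--Jacquet, combined with Shimura's transfer in the half-integral case). In every situation this gives $w = 1$ with $w'$ at worst a logarithmic power.

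With $\delta = 1$, $A = 2$, $q = r = 1$, $w = 1$, the first main error term in~\eqref{eq:CN_L1} becomes $X^{3/8 + (3/2)\eta + \epsilon}$ and the second becomes $X^{3/4 - \eta}$. Since $|A_f(n)|^2 \geq 0$, the final $O$-term in~\eqref{eq:CN_L1} drops out by the remark following Theorem~\ref{theorem:CN}. Balancing the two remaining errors by taking $\eta = 3/20$ produces the common exponent $3/5$, yielding
\[
\sum_{n \leq X}|A_f(n)|^2 = c_f X + O(X^{3/5+\epsilon}).
\]

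The most delicate step will be uniformly verifying the analytic inputs across the three form types. In particular, the half-integral-weight case requires care, since the Rankin--Selberg $L$-function and the fourth-moment bound must be accessed indirectly via Shimura's framework, and the bookkeeping of archimedean gamma factors must be carried out to confirm $A = 2$ and $\delta = 1$ in every case. Once these inputs are in place, the quoted error exponent $3/5 + \epsilon$ comes out of a purely routine optimization of the Chandrasekharan--Narasimhan estimate.
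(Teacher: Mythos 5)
Your overall strategy --- Chandrasekharan--Narasimhan applied to a Rankin--Selberg object, nonnegativity killing the last error term, and the balancing $\eta = 3/20$ producing the exponent $3/5$ --- is the same as the paper's, and your optimization of the error terms is exactly right. However, there is a genuine gap in how you set up the application of Theorem~\ref{theorem:CN}. You apply it directly to the Dirichlet series $\sum_{n\geq 1}|A_f(n)|^2 n^{-s}$, identifying that series with the Rankin--Selberg $L$-function. It is not: the object carrying the meromorphic continuation and the functional equation of the form \eqref{eq:basic_feq} is
\begin{equation*}
  L(s, f\times f) = L(2s,\chi_0)\sum_{n\geq 1}\frac{|A_f(n)|^2}{n^s} = \sum_{n\geq 1}\frac{C_f(n)}{n^s},
\end{equation*}
and the naked series you work with is the quotient $L(s,f\times f)/L(2s,\chi_0)$. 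That quotient is not holomorphic in the region Theorem~\ref{theorem:CN} needs (it acquires poles at the zeros of $L(2s,\chi_0)$), and it cannot satisfy a functional equation $G(s)L(s)=\epsilon\, G(\delta-s)L(\delta-s)$ with $G$ a product of gamma factors, because $L(2s,\chi_0)$ transforms under $s\mapsto \tfrac12 - s$ rather than $s\mapsto 1-s$ and cannot be absorbed into $G$. So the hypotheses of Theorem~\ref{theorem:CN} simply fail for your choice of coefficient sequence.

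The paper's route closes this gap: it applies Theorem~\ref{theorem:CN} to the nonnegative coefficients $C_f(n)$ of the full convolution (with $Q(x)=R_1x+R_{1/2}x^{1/2}$, since the $L(2s,\chi_0)$ factor may also contribute a pole at $s=\tfrac12$, a term you omitted but which is harmlessly absorbed into $O(X^{3/5})$), obtaining $\sum_{n\leq X}C_f(n)=R_1X+O(X^{3/5+\epsilon'})$, and then recovers the desired sum via the M\"obius-type inversion $|A_f(n)|^2=\sum_{d^2\mid n}\mu_N(d)\,C_f(n/d^2)$, summing the asymptotic over $d\leq\sqrt{X}$. That inversion step is the missing ingredient in your argument; your discussion of fourth moments and Shimura lifts, while relevant to justifying the choice of $w$, does not substitute for it.
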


\begin{proof}
  In each case, the Rankin-Selberg convolution $L$-function $L(s,f \times f)$ can be written as
  \begin{equation*}
    L(s,f\times f) =L(2s,\chi_0) \sum_{n=1}^\infty \frac{|A_f(n)|^2}{n^s} = \sum_{n \geq 1} \frac{C_f(n)}{n^s},
  \end{equation*}
for sufficiently large $\Re(s)$,  where $\chi_0$ is the trivial character mod $N$, and where we note that $C_f(n) \geq 0$.
  Further, $L(s, f\times f)$ is meromorphic with poles at $s = 1$ and possibly at $s = \frac{1}{2}$ with easily described residues $R_1$ and $R_{1/2}$, and satisfies a functional equation of the form~\eqref{eq:basic_feq} due to the functional equation of the level $N$ Eisenstein series.
  Taking $A=2, \delta=1, Q(x) = R_1 x + R_{1/2} x^{1/2}$, and $w = 1 + \epsilon$ for some small $\epsilon > 0$ and applying Theorem~\ref{theorem:CN}, it follows that


  \begin{equation*}
    \sum_{n \leq X} C_f(n) = R_1X+O(X^{\frac{3}{5}+\epsilon'}).
  \end{equation*}
  Let $\mu_N(m)=\chi_0(m) \mu(m)$, so that $L(2s,\chi_0)^{-1}= \sum_{n=1}^\infty \mu_N(n) n^{-2s}$.
  By multiplying the Dirichlet series, we get that
  \begin{equation*}
    |A_f(n)|^2 = \sum_{d^2 | n} \mu_N(d)C_f(n/d^2).
  \end{equation*}
  Then
  \begin{align*}
    \sum_{n \leq X} |A_f(n)|^2 &= \sum_{n\leq X}  \sum_{d^2 | n} \mu_N(d)C_f(n/d^2) = \sum_{d^2 \leq X} \mu_N(d) \sum_{v \leq X/d^2} C_f(v) \\
                               &= \sum_{d \leq \sqrt{X}} \mu_N(d) \left(R_1 \frac{X}{d^2}+O\left(\frac{X^{3/5}}{d^{6/5}}\right) \right).
  \end{align*}
  Since $\sum_{n=1}^\infty \mu_N(n) n^{-s}$ is convergent for $\Re s >1$  and
  \begin{equation*}
    \sum_{d^2 \leq X} \frac{\mu_N(d)}{d^2} = \frac{6}{\pi^2} \prod_{p |N} \left(1-\frac{1}{p^2} \right)^{-1} +O(1/\sqrt{X}).
  \end{equation*}
It is clear that~\eqref{rankin1} follows.
\end{proof}

\begin{corollary}\label{cor:sym}
  Let $f$ be a modular cusp form of level $N$ with non-real nebentypus $\chi$ that is either a holomorphic form as in~\eqref{eq:holomorphic_expansion} or is a weight zero Maass form as in~\eqref{eq:maass_expansion}.
  Then
  \begin{equation*}
    \sum_{n\leq X} A_f(n)^2 = O(X^{\frac{3}{5}+\epsilon})
  \end{equation*}
\end{corollary}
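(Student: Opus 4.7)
The plan is to follow the structure of the proof of Corollary~\ref{cor:adj} almost line-for-line, replacing the Rankin--Selberg convolution of $f$ against $\bar f$ (which produces $|A_f(n)|^2$ coefficients) by the convolution of $f$ against itself, without conjugation (which produces $A_f(n)^2$ coefficients). The essential new ingredient is that this second convolution is \emph{entire}, so the Chandrasekharan--Narasimhan machinery yields the claimed bound with no main term.

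Concretely, for $\Re(s)$ large I would introduce
\[
\mathcal L(s) \;:=\; L(2s,\chi^2) \sum_{n\ge 1}\frac{A_f(n)^2}{n^s} \;=\; \sum_{n\ge 1}\frac{\widetilde{C}_f(n)}{n^s},\qquad \widetilde{C}_f(n) = \sum_{d^2 e = n} \chi^2(d)\,A_f(e)^2,
\]
so that $\mathcal L(s)$ is the (non-conjugated) Rankin--Selberg $L$-function attached to $f$ and $f$. It satisfies a functional equation $s\mapsto 1-s$ relating it to the analogous convolution of $\bar f$ with itself, with gamma factor a product of two complex $\Gamma$-factors (so $A=2$, $\delta=1$ in Theorem~\ref{theorem:CN}), and its dual coefficients obey $\sum_{n\le X}|b(n)|^2\ll X^{1+\epsilon}$, so $w=1+\epsilon$.

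The main obstacle, and the only genuinely new point, is showing that $\mathcal L(s)$ is entire. The cleanest argument is that the Rankin--Selberg convolution on $\GL(2)\times\GL(2)$ has a pole at $s=1$ only when one factor is the contragredient of the other; the contragredient of $f$ has nebentypus $\bar\chi$, and since $\chi$ is non-real we have $\chi\neq\bar\chi$, so $f$ is not self-contragredient. Equivalently, factoring $\mathcal L(s) = L(s,\mathrm{Sym}^2 f)\,L(s,\chi)$ makes this transparent: $L(s,\chi)$ is entire because $\chi$ is a non-trivial Dirichlet character, and $L(s,\mathrm{Sym}^2 f)$ is entire (Shimura / Gelbart--Jacquet). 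In the half-integral weight case the same conclusion follows from Shimura's Rankin--Selberg construction, since the residue at $s=1$ is proportional to the Petersson pairing of $f$ against itself without conjugation, which vanishes when the nebentypus is non-real.

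Granted entirety, I would apply Theorem~\ref{theorem:CN} with $Q(X)\equiv 0$ and optimize the $\eta$-parameter exactly as in the proof of Corollary~\ref{cor:adj} to obtain $\sum_{n\le X}\widetilde{C}_f(n) = O(X^{3/5+\epsilon})$. Then I would invert the Dirichlet-series identity using $L(2s,\chi^2)^{-1}=\sum_m \mu(m)\chi^2(m)m^{-2s}$, which gives $A_f(n)^2 = \sum_{d^2\mid n}\mu(d)\chi^2(d)\,\widetilde{C}_f(n/d^2)$, and summing,
\[
\sum_{n\le X} A_f(n)^2 \;=\; \sum_{d\le\sqrt{X}} \mu(d)\chi^2(d)\sum_{m\le X/d^2}\widetilde{C}_f(m) \;\ll\; X^{3/5+\epsilon}\sum_{d\ge 1}\frac{1}{d^{6/5}} \;\ll\; X^{3/5+\epsilon},
\]
exactly as in the final step of Corollary~\ref{cor:adj}. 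The convergence of the $d$-sum is the only reason we need the slightly weaker exponent $3/5+\epsilon$ (rather than something that might be available from pushing the contour further left in the absence of poles), but this suffices for the stated result.
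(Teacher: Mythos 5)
Your proposal follows essentially the same route as the paper: the paper likewise writes $L(2s,\chi^2)\sum_n A_f(n)^2 n^{-s}$ as the non-conjugated Rankin--Selberg convolution, asserts it is entire (so $q=-\infty$, $Q(x)=O(1)$ in Theorem~\ref{theorem:CN}), and then proceeds exactly as in Corollary~\ref{cor:adj}, including the M\"obius-inversion step you describe. Your justification of entirety via non-self-contragredience (equivalently the $L(s,\mathrm{Sym}^2 f)L(s,\chi)$ factorization) is a correct elaboration of what the paper states without proof, so the proposal is fine.
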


\begin{proof}
  The $L$-function
  \begin{equation*}
    L(s,f\times \overline{f}) = L(2s,\chi^2) \sum_{n=1}^\infty \frac{A_f(n)^2}{n^s} = \sum_{n=1}^\infty \frac{C'_f(n)}{n^s},
  \end{equation*}
  is entire.
  So one can take $q = -\infty$ and $Q(x) = O(1)$ in our application of Theorem~\ref{theorem:CN} and otherwise proceed analogously as in the proof of Corollary~\ref{cor:adj}.
\end{proof}

We collect Deligne's celebrated proof of the Ramanujan-Petersson conjecture for full-integral weight forms~\cite{Deligne}, Bykovskii's bound for half-integral weight forms~\cite{Bykovskii98}, and the Kim-Sarnak bound for Maass forms~\cite{KS} into the following theorem giving bounds on individual coefficients.

\begin{theorem}[Deligne (1974), Bykovskii (1998), Kim-Sarnak (2003)] \label{theorem:individual_bounds}
  Let $f$ be a modular cusp form of level $N$ that is either a holomorphic form as in~\eqref{eq:holomorphic_expansion} or a weight zero Maass form as in~\eqref{eq:maass_expansion}.
  Then
  \begin{equation*}
    A_f(n) \ll n^{\alpha(f) + \epsilon},
  \end{equation*}
  where
  \begin{equation*}
    \alpha(f) = \begin{cases}
      0 & \text{if } $f$ \; \text{is full-integral weight holomorphic}, \\
      \frac{3}{16} & \text{if } $f$ \; \text{is half-integral weight holomorphic}, \\
      \frac{7}{64} & \text{if } $f$ \; \text{is a Maass form}.
    \end{cases}
  \end{equation*}
\end{theorem}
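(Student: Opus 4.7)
The theorem is a compilation of three well-known bounds on individual Fourier coefficients, so my ``proof'' would amount to citing each source and verifying that the normalizations of the Fourier expansions in~\eqref{eq:maass_expansion} and~\eqref{eq:holomorphic_expansion} match those used in the cited works. No original content is required; the three cases are essentially independent.

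For the full-integral weight holomorphic case, I would first reduce to a Hecke eigenbasis. The expansion in~\eqref{eq:holomorphic_expansion} absorbs the factor $n^{(k-1)/2}$ from the classical Fourier expansion $f(z) = \sum a_f(n) e^{2\pi inz}$, so our $A_f(n)$ are precisely the Hecke-normalized coefficients for which the Ramanujan--Petersson conjecture predicts $A_f(n) \ll n^\epsilon$. This is Deligne's theorem, obtained as a consequence of his proof of the Weil conjectures~\cite{Deligne}; for Hecke eigenforms it yields $|A_f(n)| \le d(n) \ll n^\epsilon$, and for general cusp forms one descends to a finite basis of eigenforms and takes the worst bound with the implicit constant absorbing the transition matrix.

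For half-integral weight holomorphic forms, the Ramanujan conjecture remains open, but Bykovskii's subconvex bound for twisted $L$-functions, combined with Waldspurger's formula, yields $A_f(n) \ll n^{3/16+\epsilon}$; I would invoke this verbatim from~\cite{Bykovskii98}. For weight-zero Maass forms with expansion~\eqref{eq:maass_expansion}, the coefficients $A_f(n)$ are again the Hecke-normalized ones, and the Kim--Sarnak bound~\cite{KS} towards Ramanujan gives $A_f(n) \ll n^{7/64+\epsilon}$; as in the holomorphic case, one first reduces to Hecke eigenforms.

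The only mildly delicate point, and the closest thing to a genuine obstacle, is bookkeeping of normalizations across the different references, since some authors work with the unnormalized coefficient $a_f(n)$ and include the archimedean weight in the exponent of growth. Once one checks in each of the three cases that the expansions in~\eqref{eq:maass_expansion} and~\eqref{eq:holomorphic_expansion} are arithmetically normalized (so that the relevant conjecture asserts $A_f(n) \ll n^\epsilon$), the three cited bounds give exactly the exponents $0$, $\tfrac{3}{16}$, and $\tfrac{7}{64}$ claimed.
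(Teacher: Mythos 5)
Your proposal matches the paper exactly: the theorem is stated there as a compilation of the three cited results of Deligne, Bykovskii, and Kim--Sarnak, with no further proof given, so citing each bound and checking that the arithmetically normalized expansions in~\eqref{eq:holomorphic_expansion} and~\eqref{eq:maass_expansion} agree with the normalizations in those references is precisely what is needed. The reduction to a Hecke eigenbasis and the normalization bookkeeping you flag are the right (and only) points of care.
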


Finally, we apply Theorem~\ref{theorem:CN} and reason as in~\cite{MM} to produce the following.

\begin{corollary}\label{cor:sum}
  Let $f$ be a modular cusp form of level $N$ that is either a holomorphic form as in~\eqref{eq:holomorphic_expansion} or a weight zero Maass form as in~\eqref{eq:maass_expansion}, possibly with nontrivial nebentypus $\chi$.
  Then
  \begin{equation*}
    \sum_{n \leq X} A_f(n) \ll X^{\beta(f) + \epsilon}
  \end{equation*}
  where
  \begin{equation*}
    \beta(f) = \begin{cases}
      \frac{1}{3} & \text{if } $f$ \; \text{is full-integral weight holomorphic}, \\
      \frac{19}{48} & \text{if } $f$ \; \text{is half-integral weight holomorphic}, \\
      \frac{71}{192} & \text{if } $f$ \; \text{is a Maass form}.
    \end{cases}
  \end{equation*}
\end{corollary}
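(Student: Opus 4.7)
The plan is to apply Theorem~\ref{theorem:CN} directly to the normalized Dirichlet series $L(s,f) = \sum_{n \geq 1} A_f(n) n^{-s}$. In each of the three cases, this $L$-function is entire and satisfies a functional equation of the shape~\eqref{eq:basic_feq} with $\delta = 1$ and gamma-factor parameter $A = 1$ after normalization (for Maass forms, the two half-gamma factors combine to give $A = 1/2 + 1/2$; for integral- and half-integral-weight holomorphic forms the single gamma factor has leading coefficient $1$ on $s$).

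Since $f$ is cuspidal, $L(s,f)$ has no poles, so in Theorem~\ref{theorem:CN} the contour $\mathcal{C}$ encloses no singularities, giving $Q(X) = 0$ and killing the $X^{q - 1/(2A) - \eta}\log(X)^{r-1}$ term. Corollary~\ref{cor:adj} provides $\sum_{n \leq X} |A_f(n)|^2 \ll X$, so we may take $w = 1$ and $w' = 0$. Substituting $\delta = 1$, $A = 1$, and $w = 1$ into~\eqref{eq:CN_L1}, the estimate collapses to
\begin{equation*}
  S_f(X) \ll X^{\frac{1}{4} + \frac{\eta}{2} + \epsilon} + \sum_{X \leq n \leq X'} |A_f(n)|,
\end{equation*}
valid for any $\eta \geq 0$ and any $X' = X + O(X^{1/2 - \eta})$.

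To bound the short tail, I use the pointwise estimate from Theorem~\ref{theorem:individual_bounds}:
\begin{equation*}
  \sum_{X \leq n \leq X'} |A_f(n)| \ll (X' - X)\, X^{\alpha(f) + \epsilon} \ll X^{\frac{1}{2} - \eta + \alpha(f) + \epsilon}.
\end{equation*}
Balancing the two error exponents forces $\tfrac{1}{4} + \tfrac{\eta}{2} = \tfrac{1}{2} - \eta + \alpha(f)$, giving the optimal choice $\eta = \tfrac{1}{6} + \tfrac{2\alpha(f)}{3}$ and a combined exponent of $(1 + \alpha(f))/3$. Plugging in $\alpha(f) \in \{0,\ 3/16,\ 7/64\}$ from Theorem~\ref{theorem:individual_bounds} recovers the claimed values $\beta(f) \in \{1/3,\ 19/48,\ 71/192\}$.

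The main technical point to verify is that the structural hypotheses of Theorem~\ref{theorem:CN} are met uniformly across the three cases: namely, that the $L$-function $L(s,g)$ appearing on the dual side of the functional equation is itself cuspidal of the same type, so that Corollary~\ref{cor:adj} supplies the needed second-moment bound for $g$ as well as for $f$. In the Maass and full-integral-weight holomorphic settings this is standard; for half-integral weight the dual form is obtained via a Fricke-type involution and remains a half-integral weight cusp form, so the argument goes through. Once this is in place, the remainder is a direct balancing of parameters.
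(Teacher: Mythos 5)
Your proposal is correct and follows essentially the same route as the paper: apply Theorem~\ref{theorem:CN} to the entire $L$-function $L(s,f)$ with $A=1$, $\delta=1$, $w=1$ (the paper takes $w=1+\epsilon$), bound the short tail via the pointwise estimates of Theorem~\ref{theorem:individual_bounds}, and balance $\eta = \tfrac{1}{6}+\tfrac{2\alpha(f)}{3}$ to get the exponent $\tfrac{1}{3}+\tfrac{\alpha(f)}{3}$. Your explicit check that the dual form $g$ on the other side of the functional equation also satisfies the second-moment hypothesis is a point the paper leaves implicit, but otherwise the arguments coincide.
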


\begin{proof}
  In each case, the associated $L$-function
  \begin{equation*}
    L(s,f) = \sum_{n=1}^\infty \frac{A_f(n)}{n^s}
  \end{equation*}
  is an entire function and satisfies a functional equation
  \begin{equation*}
    G(s)L(s,f)=G(1-s)L(1-s,g),
  \end{equation*}
  where $G(s)$ is as in Theorem \eqref{theorem:CN}, and where $g$ is an associated modular form.
  Taking $A = 1$, $\delta = 1$, $w = 1 + \epsilon$, $Q(x) = O(1)$, $q = -\infty$ and applying Theorem~\ref{theorem:CN}, we have
  \begin{equation} \label{balancing}
    \sum_{n \leq X} A_f(n) = O\left(X^{\frac{1}{4}+(\frac{1}{2}+\epsilon)\eta}\right)+O\left(\sum_{X \leq n\leq X'} |A_f(n)| \right),
  \end{equation}
  where $X'=X+O(X^{\frac{1}{2}-\eta})$.
  The bounds $A_f(n) \ll n^{\alpha(f) + \epsilon}$ lead to the trivial bound
  \begin{equation*}
    \sum_{X \leq n\leq X'} |A_f(n)| \ll X^{\frac{1}{2} - \eta + \alpha(f) + \epsilon}.
  \end{equation*}
  Balancing $\eta$ in~\eqref{balancing} we get
  \begin{equation*}
    \sum_{n \leq X} A_f(n) \ll O\left(X^{\frac{1}{3} + \frac{\alpha(f)}{3} + \epsilon}\right).
  \end{equation*}
  Inserting the bounds from Theorem~\ref{theorem:individual_bounds} completes the proof.
\end{proof}

By  inputting the bounds in Corollaries~\ref{cor:adj},~\ref{cor:sym}, and~\ref{cor:sum} and Theorem~\ref{theorem:individual_bounds} into Theorem~\ref{theorem:main_sign_changes} we get the following theorem on generalized sign changes.

\begin{theorem} \label{thm:gl2intervals}
  Let $f$ be a modular cusp form of level $N$, possibly with nontrivial nebentypus $\chi$, that is either a holomorphic form as in~\eqref{eq:holomorphic_expansion} or a weight zero Maass form as in~\eqref{eq:maass_expansion}.
If there is an $n$ such that $\Re A_f(n) \neq 0$ (resp. $\Im A_f(n) \neq 0$) then $\Re A_f(n)$ (resp. $\Im A_f(n)$) changes sign at least once for $n \in (X, X + X^{\frac{3}{5}}]$ for $X \gg 1$.
\end{theorem}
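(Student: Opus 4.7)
The plan is an essentially mechanical application of Theorem~\ref{theorem:main_sign_changes} to the sequence $a(n) = A_f(n)$. I would read off the four growth parameters directly from the earlier results: take $\alpha = \alpha(f)$ from Theorem~\ref{theorem:individual_bounds}, take $\beta = \beta(f) + \epsilon$ from Corollary~\ref{cor:sum}, and take $\gamma_1 = 1$, $c_1 = c_f$, and $\eta_1 = \tfrac{3}{5} + \epsilon$ from Corollary~\ref{cor:adj}. When $f$ has non-real nebentypus, Corollary~\ref{cor:sym} supplies condition~(4) with $c_2 = 0$ and $\eta_2 = \tfrac{3}{5} + \epsilon$; the value of $\gamma_2$ is then immaterial, since $\max(\gamma_1, \gamma_2)$ can be taken to equal $1$. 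When $f$ has trivial or real nebentypus the coefficients $A_f(n)$ are real, so the claim about $\Im A_f(n)$ is vacuous and the claim about $\Re A_f(n) = A_f(n)$ follows from the first half of Theorem~\ref{theorem:main_sign_changes} alone, applied with a narrow wedge around the positive real axis and then another around the negative real axis.

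The remaining step is arithmetic bookkeeping. I would verify that in each of the three cases,
\begin{equation*}
\alpha(f) + \beta(f) \in \left\{ \tfrac{1}{3},\; \tfrac{7}{12},\; \tfrac{23}{48} \right\},
\end{equation*}
each of which is strictly less than $\tfrac{3}{5}$. Therefore $\max(\alpha + \beta, \eta_1, \eta_2) = \tfrac{3}{5} + \epsilon$, and since $\gamma_1 = 1$, inequality~\eqref{rbound2} reduces to $r > \tfrac{3}{5} + \epsilon$. Any $r$ slightly exceeding $\tfrac{3}{5}$ is admissible, giving the interval $(X, X + X^{3/5}]$ after absorbing $\epsilon$ into the choice of $r$. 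The non-degeneracy condition $\Re(c_2) X^{\gamma_2} \neq \pm c_1 X^{\gamma_1}$ collapses, once $c_2 = 0$ is substituted, to the requirement $c_f \neq 0$; this holds automatically under the hypothesis that some $A_f(n) \neq 0$, since $c_f$ is the leading coefficient of the asymptotic in~\eqref{rankin1}.

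There is no substantive difficulty here: the proof is bookkeeping, and the exponent $\tfrac{3}{5}$ in the final statement is inherited directly from the error term in the Rankin-Selberg second-moment asymptotic of Corollary~\ref{cor:adj}. The fortunate feature that makes the whole argument go through uniformly in the three cases is the numerical fact that the Chandrasekharan-Narasimhan partial-sum exponent $\beta(f)$ together with the individual bound $\alpha(f)$ never exceeds $\tfrac{3}{5}$, so that the Rankin-Selberg error alone controls the size of the sign-change interval; any improvement to Corollary~\ref{cor:adj} below the threshold $\alpha(f) + \beta(f)$ would correspondingly shrink the interval length.
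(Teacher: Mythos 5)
Your proposal is correct and is essentially identical to the paper's own proof, which consists of the single sentence ``By inputting the bounds in Corollaries~\ref{cor:adj},~\ref{cor:sym}, and~\ref{cor:sum} and Theorem~\ref{theorem:individual_bounds} into Theorem~\ref{theorem:main_sign_changes} we get the following theorem''; your arithmetic ($\alpha(f)+\beta(f) = \tfrac{1}{3}, \tfrac{7}{12}, \tfrac{23}{48}$, all below $\tfrac{3}{5}$, so that the Rankin--Selberg error term $\eta_1 = \tfrac{3}{5}+\epsilon$ dominates in \eqref{rbound2}) checks out and is exactly the bookkeeping the paper leaves implicit. One caveat: your side remark that trivial or real nebentypus forces the $A_f(n)$ to be real is not accurate in general (for a newform with real nontrivial $\chi$ one has $A_f(n) = \chi(n)\overline{A_f(n)}$, so the coefficients at $n$ with $\chi(n) = -1$ are purely imaginary, and for non-eigenforms even trivial nebentypus does not force real coefficients), so your wedge-based fallback does not actually deliver separate sign changes of $\Re A_f(n)$ and $\Im A_f(n)$ in that case --- but this mirrors a gap in the paper itself, since Corollary~\ref{cor:sym} is likewise stated only for non-real $\chi$ while Theorem~\ref{thm:gl2intervals} claims the result for arbitrary nebentypus.
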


\section{Individual Coefficients of $\GL(3)$ Cusp Forms}
\label{sec:individual_gl3}

We now consider a Maass cusp form $\phi$ on $\SL_3(\mathbb{Z})$ with Fourier coefficients $A(m,n)$, normalized so that $A(1,1) = 1$, and which is a simultaneous eigenfunction of the Hecke operators.
We can write $\phi$ as
\begin{equation*}
  \phi(z) = \sum_{\gamma \in U_2(\mathbb{Z}) \backslash \SL_2(\mathbb{Z})} \sum_{m_1, m_2} A(m_1, m_2) W (m_1, m_2, \gamma, z),
\end{equation*}
where $W$ is Jacquet's Whittaker function (as in~\cite[Chapter 6.2]{Goldfeld2006automorphic}).
We investigate sign changes in $\{A(1,n)\}_{n \geq 1}$ using Theorem~\ref{theorem:signchanges_analytic}.

Recall the standard Godement-Jacquet $L$-function
\begin{equation*}
  L(s,f) := \sum_{n \geq 1} \frac{A(1,n)}{n^s} = \prod_p (1 - \alpha_{1,p}p^{-s})^{-1}(1 - \alpha_{2,p}p^{-s})^{-1}(1 - \alpha_{3,p}p^{-s})^{-1}.
\end{equation*}
As shown in~\cite{KS}, we have $\lvert \alpha_{i,p}\rvert \leq p^{\frac{5}{14}}$, giving the bound $\lvert A(1,n) \rvert \ll n^{\frac{5}{14} + \epsilon}$ for any $\epsilon > 0$.
Note that a direct application of the functional equation of $L(s,f)$ leads to the trivial bound $L(it, f) \ll (1 + \lvert t \rvert)^{3/2}$.

We introduce and study the following two Dirichlet series,
\begin{align*}
  D_1(s) &:= \sum_{n \geq 1} \frac{A(1,n)\overline{A(1,n)}}{n^s} \\
  D_2(s) &:= \sum_{n \geq 1} \frac{A(1,n)^2}{n^s}.
\end{align*}
These are not natural $L$-functions, but we can understand the behaviour of $D_1$ and $D_2$ by relating them to the $\GL(3)$ Rankin-Selberg convolution $L$-functions $L(s, f\times \overline{f})$ and $L(s, f\times f)$.

\begin{lemma}
  With the notation above,
  \begin{align*}
    D_1(s) &= \sum_{n \geq 1} \frac{A(1,n)\overline{A(1,n)}}{n^s} = L(s,f \times f) T_1(s) \\
    D_2(s) &= \sum_{n \geq 1} \frac{A(1,n)^2}{n^s} = L(s, f \times \overline{f}) T_2(s),
  \end{align*}
  where $L(s, f\times f)$ denotes the Rankin-Selberg convolution $L$-function
  \begin{equation*}
    L(s, f\times f) := \zeta(3s) \sum_{m,n} \frac{A(m_1, m_2)\overline{A(m_1, m_2)}}{m_1^{2s} m_2^s},
  \end{equation*}
  and where
  \begin{align*}
    T_1(s) &= \prod_p (1 - b_p p^{-2s} + (2b_p-2) p^{-3s} -b_p p^{-4s} + p^{-6s}) \\
    T_2(s) &= \prod_p \left(1 - \sum_{i,j=1}^3 \alpha_{i,p}^{-1} \alpha_{j,p}^{-1}p^{-2s} + \frac{2b_p - 2}{p^{3s}} - \sum_{i,j=1}^3 \alpha_{i,p} \alpha_{j,p}p^{-4s} + p^{-6s}\right) \\
    b_p &= \sum_{i=1}^3 \sum_{j=1}^3 \frac{\alpha_{i,p}}{\alpha_{j,p}}.
  \end{align*}
  Further, $T_1$ and $T_2$ are holomorphic and absolutely convergent for $\Re s > \frac{6}{7}$.
\end{lemma}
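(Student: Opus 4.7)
I would prove both identities via Euler products at each prime. Since $\phi$ is a simultaneous Hecke eigenform, $A(1,n)$ is multiplicative, so the sequences $|A(1,n)|^2$ and $A(1,n)^2$ are multiplicative and $D_1(s) = \prod_p D_{1,p}(s)$, $D_2(s) = \prod_p D_{2,p}(s)$. The Rankin--Selberg convolutions $L(s, f\times f)$ and $L(s, f\times\overline f)$ and the products $T_1, T_2$ are Euler products as well, so the claim reduces to verifying the local identity $D_{i,p}(s) = L_p(s, f\times f)\cdot T_{i,p}(s)$ (resp.\ with $\overline f$) at each prime $p$, as an identity of rational functions in $p^{-s}$.

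Fix a prime $p$ and let $\alpha_1,\alpha_2,\alpha_3$ denote the Satake parameters of $\phi$ at $p$, normalized by $\alpha_1\alpha_2\alpha_3=1$. Then $A(1,p^k)=h_k(\alpha_1,\alpha_2,\alpha_3)$ is the complete homogeneous symmetric polynomial, with generating function $\sum_k h_k(\alpha)t^k=\prod_i(1-\alpha_it)^{-1}$. To compute $D_{1,p}(s)$, I would write the diagonal $\sum_k h_k(\alpha)h_k(\overline\alpha)t^k$ (with $t=p^{-s}$) as the constant term in an auxiliary variable $w$ of the product $\prod_i(1-\alpha_itw)^{-1}\prod_j(1-\overline{\alpha_j}w^{-1})^{-1}$ and then extract it by residue calculus at the three poles $w=\overline{\alpha_j}$. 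This yields a three-term partial-fraction expression with denominators $\prod_{k\neq j}(\overline{\alpha_j}-\overline{\alpha_k})$. Multiplying through by the local Rankin--Selberg factor $\prod_{i,j}(1-\alpha_i\overline{\alpha_j}t)$ clears all poles in $t$, leaving a polynomial of degree at most $6$. Collecting terms via symmetric-function identities in $\alpha$ and $\overline\alpha$ and using $\alpha_1\alpha_2\alpha_3=1$, this polynomial matches the claimed form $1-b_pt^2+(2b_p-2)t^3-b_pt^4+t^6$. The argument for $T_{2,p}(s)$ is parallel, with $h_k(\overline\alpha)$ replaced by $h_k(\alpha)$ and the denominator supplied by the local factor of $L(s,f\times\overline f)$.

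For holomorphy and absolute convergence of $T_1,T_2$ on $\Re s>6/7$, I would invoke the Kim--Sarnak bound $|\alpha_{i,p}|\leq p^{5/14}$, applied also to $\tilde\phi$ (whose Satake parameters are $\alpha_{i,p}^{-1}$) to get $|\alpha_{i,p}|^{-1}\leq p^{5/14}$. This yields $|b_p|\leq 9p^{5/7}$ and analogous bounds for $\sum_{i,j}\alpha_{i,p}\alpha_{j,p}$ and $\sum_{i,j}\alpha_{i,p}^{-1}\alpha_{j,p}^{-1}$, so every local factor satisfies $T_{i,p}(s)=1+O(p^{5/7-2\Re s})$; the Euler product then converges absolutely (and hence is holomorphic) when $2\Re s-5/7>1$. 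The main technical obstacle is the symmetric-function identification in the second step: the residue formula for $D_{1,p}$ naturally produces a three-term partial-fraction expression whose combination into the compact symmetric polynomial stated in the lemma requires careful application of Newton's identities together with the central-character relation $\alpha_1\alpha_2\alpha_3=1$, and the analogous calculation for $T_{2,p}$ must be tracked separately because the roles of $\alpha$ and $\overline\alpha$ are no longer symmetric.
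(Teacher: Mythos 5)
Your proposal is correct and follows essentially the same route as the paper: reduce to an Euler-factor identity at each prime via the Hecke multiplicativity of $A(1,n)$, express $A(1,p^k)$ through the Satake parameters, identify the degree-$6$ numerator polynomial over the local Rankin--Selberg denominator, and deduce convergence of $T_1,T_2$ for $\Re s > \tfrac67$ from the Kim--Sarnak bound $\lvert \alpha_{i,p}^{\pm 1}\rvert \le p^{5/14}$ exactly as the paper does. The only difference is bookkeeping: the paper substitutes the explicit bialternant formula for $A(1,p^k)$, sums the resulting geometric series into a nine-term expression, and verifies the simplification with a computer algebra system, whereas you extract the diagonal via a constant-term/residue computation on the generating function $\prod_i(1-\alpha_i t w)^{-1}$ and propose to finish the symmetric-function identification by hand.
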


\begin{proof}
  By writing the sums as products over primes, we need only consider the $p$-parts.
  Note that $\overline{A(1, p^k)} = \widetilde{A}(1, p^k)$, where $\widetilde{A}$ are the coefficients of $\widetilde{f}$, the dual Maass form to $f$, and which has Satake parameters $\alpha_{1,p}^{-1},\alpha_{2,p}^{-1}$, and $\alpha_{3,p}^{-1}$.
  The coefficient $A(1, p^k)$ can be written explicitly in terms of the Satake parameters~\cite[Chapter 7.4]{Goldfeld2006automorphic} as
  \begin{equation*}
    A(1,p^k) = \frac{\alpha_{1,p}^{k+2}(\alpha_{2,p} - \alpha_{3,p}) + \alpha_{2,p}^{k+2}(\alpha_{3,p} - \alpha_{1,p}) + \alpha_{3,p}^{k+2}(\alpha_{1,p} - \alpha_{2,p})}{(\alpha_{2,p} - \alpha_{1,p})(\alpha_{3,p} - \alpha_{2,p})(\alpha_{1,p} - \alpha_{3,p})}.
  \end{equation*}
Using that $\alpha_{1,p}\alpha_{2,p}\alpha_{3,p}=1$ and letting $\alpha_{i+3,p} = \alpha_{i,p}$ for $i=1,2,3$, we have that
  \begin{align*}
    \sum_{k \geq 0} &\frac{|A(1,p^k)|^2}{p^{ks}}  \\
    &= -\sum_{k \geq 0} \sum_{i,j=1}^3 \frac{\alpha_{i,p}^2 \alpha_{j,p}^{-2}(\alpha_{i+1,p}-\alpha_{i+2,p})(\alpha_{j+1,p}^{-1}-\alpha_{j+2,p}^{-1}) (\alpha_{i,p} \alpha_{j,p}^{-1})^k}{p^{ks} (\alpha_{2,p} - \alpha_{1,p})^2 (\alpha_{3,p} - \alpha_{2,p})^2(\alpha_{1,p} - \alpha_{3,p})^2 } \\
    &= -\sum_{i,j=1}^3 \frac{\alpha_{i,p}^2 \alpha_{j,p}^{-2}(\alpha_{i+1,p}-\alpha_{i+2,p})(\alpha_{j+1,p}^{-1}-\alpha_{j+2,p}^{-1}) }{(\alpha_{2,p} - \alpha_{1,p})^2 (\alpha_{3,p} - \alpha_{2,p})^2(\alpha_{1,p} - \alpha_{3,p})^2 (1 - \alpha_{i,p} \alpha_{j,p}^{-1} p^{-s})}.
  \end{align*}
  Using Sage~\cite{sage}, one can quickly verify that this can be rewritten as


  \begin{align*}
    \sum_{k \geq 0} \frac{|A(1,p^k)|^2}{p^{ks}}= &\frac{1 - b_p p^{-2s} + (2b_p-2) p^{-3s} - b_p p^{-4s} + p^{-6s}}{\prod_{i=1}^3 \prod_{j=1}^3 (1-\frac{\alpha_{i,p}}{\alpha_{j,p}} p^{-s})},
  \end{align*}
  with $b_p$ as claimed in the statement of the Lemma.

  This shows that $D_1 = L(s, f\times f) T_1(s)$.
  With respect to convergence, $T_1(s)$ converges if and only if
  \begin{equation}\label{eq:T1_convergence_sum}
    \sum_{p} \frac{b_p}{p^{2s}}
  \end{equation}
  converges.
  As $b_p \ll (p^{\frac{5}{14}})^2 = p^{\frac{5}{7}}$, we see~\eqref{eq:T1_convergence_sum} converges for $\Re s > \frac{6}{7}$ immediately.

  The work for $D_2$ is very similar, and follows by manipulating the Satake parameters (perhaps facilitated again by a CAS such as Sage).
\end{proof}

As $L(s,f \times f)$ has a pole at $s = 1$ with residue proportional to $\langle f, f\rangle$, the Petersson inner product, we see that $D_1$ has a pole at $s = 1$ with residue proportional to $\langle f, f\rangle$ (and similarly for $D_2$).
Applying the Phragm\'{e}n-Lindel\"{o}f principle to $L(s, f\times f)$ and $L(s, f\times \overline{f})$ gives that for $\Re s = \frac{6}{7} + \epsilon$,
\begin{equation*}
  D_1(s), D_2(s) \ll (1 + \lvert t \rvert)^{\frac{9}{14} + \epsilon}.
\end{equation*}

We may now apply Theorem~\ref{theorem:signchanges_analytic} with
\begin{align*}
  \alpha &= \frac{5}{14} + \epsilon, \sigma_0 = 0, \beta = \frac{3}{2} + \epsilon, \\
  \gamma_{1} &= 1, R_1 \sim \langle f, f\rangle, \sigma_1 = \frac{6}{7} + \epsilon, \eta_{1} = \frac{9}{14} + \epsilon, \\
  \gamma_{2} &= 1, R_2 \sim \langle f, \overline{f}\rangle, \sigma_2 = \frac{6}{7} + \epsilon, \eta_{2} = \frac{9}{14} + \epsilon.
\end{align*}
When the coefficients are not all real-valued, note that $\lvert R_1 \rvert > \lvert R_2 \rvert$.
We conclude the following result for sign changes.

\begin{theorem} \label{GL3}
  Let $\phi$ be a Maass Hecke cusp form on $\SL_3(\mathbb{Z})$ with Fourier coefficients $A(m,n)$.
  For any $\epsilon > 0$, $\{\Re A(1,n)\}_{n \in \mathbb{N}}$ has at least one sign change for $n \in (X, X + X^{\frac{21}{23} + \epsilon}]$ for all $X \gg 1$.

  Further, if there are coefficients with $\Im A(1,n) \neq 0$, then for any $\epsilon >0$ we also have that $\{ \Im A(1,n)\}_{n \in \mathbb{N}}$ has at least one sign change for $n \in (X, X+X^{\frac{21}{23} + \epsilon}]$ for all $X \gg 1$.
\end{theorem}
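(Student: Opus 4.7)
The plan is to apply Theorem~\ref{theorem:signchanges_analytic} directly to the sequence $a(n) = A(1,n)$; the analytic ingredients for hypotheses (1)--(4) have already been assembled in this section, so the work reduces to identifying parameters, verifying the residue inequalities, and balancing exponents in~\eqref{rbound4}. Hypothesis (1) holds with $\alpha = \tfrac{5}{14}+\epsilon$ by the Kim--Sarnak bound on the Satake parameters. For hypothesis (2), the Godement--Jacquet $L$-function $L(s,f)$ is entire and satisfies the standard $\GL(3)$ functional equation, so Phragm\'en--Lindel\"of applied between the absolutely convergent edge $\Re s = 1+\epsilon$ and the reflected edge $\Re s = -\epsilon$---on which the gamma ratio grows like $|t|^{3/2}$---yields $L(it,f) \ll (1+|t|)^{3/2+\epsilon}$. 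Thus I take $\sigma_0 = 0$ and $\beta = \tfrac{3}{2}+\epsilon$.

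For hypotheses (3) and (4), the lemma established above factors $D_1(s) = L(s, f\times f)\,T_1(s)$ and $D_2(s) = L(s, f\times\overline{f})\,T_2(s)$, with each $T_i$ absolutely convergent, hence bounded on vertical lines in $\Re s > \tfrac{6}{7}$. Each Rankin--Selberg convolution is a degree-$9$ $L$-function with (at worst) a simple pole at $s = 1$ of residue $R_1 \propto \langle f,f\rangle > 0$ and $R_2 \propto \langle f,\overline{f}\rangle$, respectively. Applying Phragm\'en--Lindel\"of in the strip $\tfrac{6}{7}+\epsilon \leq \Re s \leq 1+\epsilon$ gives $D_i\bigl(\tfrac{6}{7}+\epsilon+it\bigr) \ll (1+|t|)^{9/14+\epsilon}$. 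This supplies $\gamma_1 = \gamma_2 = 1$, $\sigma_1 = \sigma_2 = \tfrac{6}{7}+\epsilon$, and $\eta_1 = \eta_2 = \tfrac{9}{14}+\epsilon$.

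Substituting into~\eqref{rbound4}, the Rankin--Selberg term dominates:
$$1 + \frac{\sigma_1 - \gamma}{1 + \eta_1} \;=\; 1 - \frac{1/7}{23/14} \;=\; \frac{21}{23},$$
while the $L(s,f)$ contribution is the smaller $1 - (9/14)/(5/2) = \tfrac{26}{35}$, so any $r > \tfrac{21}{23}+\epsilon$ is admissible. For the residue conditions, $R_1$ is a positive real, and the observation $|R_1| > |R_2|$ (valid when not every $A(1,n)$ is real) immediately gives $\Re R_2 \neq \pm R_1$ in that case; when all $A(1,n)$ are real one has $R_2 = R_1 > 0$, which still satisfies $\Re R_2 \neq -R_1$ and delivers the unconditional sign change for $\Re A(1,n)$, while the imaginary-part hypothesis is then vacuous. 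The only nonroutine step is the convexity bound for the two degree-$9$ Rankin--Selberg convolutions on the line $\Re s = \tfrac{6}{7}+\epsilon$; this is standard given their meromorphic continuation, $\GL(3)\times\GL(3)$ functional equation, and absolute convergence beyond $\Re s = 1$, but warrants care with the archimedean factors.
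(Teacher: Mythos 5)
Your proposal follows the paper's own proof essentially verbatim: the same parameter choices $\alpha = \tfrac{5}{14}+\epsilon$, $\sigma_0=0$, $\beta=\tfrac{3}{2}+\epsilon$, $\gamma_1=\gamma_2=1$, $\sigma_1=\sigma_2=\tfrac{6}{7}+\epsilon$, $\eta_1=\eta_2=\tfrac{9}{14}+\epsilon$ fed into Theorem~\ref{theorem:signchanges_analytic}, with the factorization of $D_1, D_2$ through the Rankin--Selberg convolutions and Phragm\'en--Lindel\"of supplying the vertical-line bounds, and the exponent computation $1-\tfrac{1/7}{23/14}=\tfrac{21}{23}$ matching exactly. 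Your treatment of the residue conditions (splitting the all-real case from the non-real case via $|R_1|>|R_2|$) is if anything slightly more explicit than the paper's.
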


It is worth discussing the above theorem in light of a more recent result due to Meher and Murty in~\cite{MM2}.
\begin{theorem}[Meher and Murty (2016)]\label{mmty}
Let $L(s, \pi)$ be the automorphic $L$-function associated to an automorphic irreducible self-dual cuspidal representation $\pi$ of $\GL_m(\mathbb{A}_{\mathbb{Q}})$
with unitary central character satisfying the Ramanujan conjecture, and let $a_\pi(n)$ be its $n$\textsuperscript{th} coefficient.
If the sequence $\{a_\pi(n)\}_{n=1}^\infty$ is a sequence of real numbers, then for any real number $r$ satisfying $\frac{m^2-1}{m^2+1} < r < 1$
 the sequence $\{a_\pi(n)\}_{n=1}^\infty$ has at least one sign change for
 $n \in [X,X+X^r]$.
 Consequently, the number of sign changes of $a_\pi(n)$ for $n \leq x$ is $\gg X^{1-r}$ for sufficiently large $X$.
 \end{theorem}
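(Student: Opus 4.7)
The plan is to verify the three hypotheses of Theorem~\ref{theorem:main_sign_changes} for the sequence $\{a_\pi(n)\}$ and then invoke it directly. Since $a_\pi(n)$ is real by hypothesis, the conclusion of that theorem is equivalent to a genuine sign change (per the remark following Theorem~\ref{theorem:main_sign_changes}), and the count $\gg X^{1-r}$ then follows by partitioning $[1, X]$ into disjoint intervals $[Y_i, Y_i + Y_i^r]$ with $Y_{i+1} = Y_i + Y_i^r$, each of which contains a sign change.

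Hypothesis (1) is immediate from the Ramanujan conjecture: $\alpha = \epsilon$. For hypothesis (2), I would apply Theorem~\ref{theorem:CN} to $L(s,\pi)$, which is entire (so $q = -\infty$) and satisfies the standard $\GL_m$ functional equation of degree $m$, giving $A = m/2$ and $\delta = 1$. Under Ramanujan, $\sum_{n \leq X} |a_\pi(n)|^2 \ll X^{1+\epsilon}$, so $w = 1$. The pole term in~\eqref{eq:CN_L1} does not appear, and the tail bound $\sum_{X \leq n \leq X'} |a_\pi(n)| \ll X^{1 - 1/m - \eta + \epsilon}$ combines with the first error term $X^{1/2 - 1/(2m) + (m-1)\eta/2 + \epsilon}$. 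Balancing with $\eta = (m-1)/(m(m+1))$ yields $\sum_{n \leq X} a_\pi(n) \ll X^{(m-1)/(m+1) + \epsilon}$, so $\beta = (m-1)/(m+1) + \epsilon$.

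For hypothesis (3), I would apply Theorem~\ref{theorem:CN} to the Rankin-Selberg $L$-function $L(s, \pi \times \tilde\pi) = L(s, \pi \times \pi)$, which has degree $m^2$ (so $A = m^2/2$), $\delta = 1$, a simple pole at $s = 1$ with residue proportional to the Petersson norm $\langle \pi, \pi \rangle$, and Dirichlet coefficients of size $O(n^\epsilon)$ under Ramanujan (so again $w = 1$). Balancing analogously with $\eta = (m^2-1)/(m^2(m^2+1))$ gives a partial-sum error of $O(X^{(m^2-1)/(m^2+1) + \epsilon})$ for the Rankin-Selberg coefficients. Transferring this to $\sum |a_\pi(n)|^2$ via the factorization $L(s, \pi \times \tilde\pi) = \zeta(s) L(s, \mathrm{Adj}\,\pi)$ and Möbius-type Dirichlet inversion, in direct parallel with Corollary~\ref{cor:adj}, produces $\sum_{n \leq X} |a_\pi(n)|^2 = c_\pi X + O(X^{(m^2-1)/(m^2+1) + \epsilon})$. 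Thus $\gamma_1 = 1$ and $\eta_1 = (m^2-1)/(m^2+1) + \epsilon$.

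Plugging into Theorem~\ref{theorem:main_sign_changes}, the constraint becomes $r > \max(\alpha + \beta, \eta_1) - (\gamma_1 - 1)$, and since $(m-1)/(m+1) < (m^2-1)/(m^2+1)$ (clear upon cross-multiplying), this reduces to $r > (m^2-1)/(m^2+1)$, matching the statement. The main obstacle is the careful Chandrasekharan-Narasimhan bookkeeping on the Rankin-Selberg side for general $m$: one must verify that $L(s, \pi \times \tilde\pi)$ fits the framework of Theorem~\ref{theorem:CN} with the correct $A$ and $\delta$, handle the auxiliary Euler factors when passing from the Dirichlet coefficients of $L(s, \pi \times \tilde\pi)$ to $|a_\pi(n)|^2$, and verify that the balancing choice of $\eta$ genuinely equalizes the first and third error terms at the exponent $(m^2-1)/(m^2+1)$. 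Under Ramanujan all of this is essentially routine, but it is more intricate than the $\GL(2)$ instance treated earlier in the paper.
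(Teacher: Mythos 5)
You should know at the outset that the paper does not prove this statement at all: Theorem~\ref{mmty} is quoted from Meher and Murty~\cite{MM2} purely as a point of comparison for Theorem~\ref{GL3}, so there is no internal proof to measure your argument against. Judged on its own, your sketch is essentially correct, and it is the natural reconstruction of the Meher--Murty argument: their axiomatization in~\cite{MM} is the real-coefficient case of Theorem~\ref{theorem:main_sign_changes}, and both of your inputs come from Theorem~\ref{theorem:CN} applied exactly as the paper applies it in Section~\ref{sec:individual_gl2}. Your bookkeeping checks out: for $L(s,\pi)$ one has $A = m/2$, $\delta = 1$, $q = -\infty$, and the balancing choice $\eta = \frac{m-1}{m(m+1)}$ equalizes the two error terms at $\beta = \frac{m-1}{m+1}+\epsilon$; for $L(s,\pi\times\widetilde{\pi})$ one has $A = m^2/2$, $\delta = q = 1$, nonnegative coefficients (so the tail term drops), and the same computation with $m^2$ in place of $m$ gives $\eta_1 = \frac{m^2-1}{m^2+1}+\epsilon$; and indeed $\frac{m-1}{m+1} < \frac{m^2-1}{m^2+1}$, so the Rankin--Selberg error dominates in \eqref{rbound1}. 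The partition of $[1,X]$ into consecutive intervals of length $Y_i^r$ is also the standard way to extract the count $\gg X^{1-r}$.

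The one imprecise step is the transfer from the Dirichlet coefficients of $L(s,\pi\times\widetilde{\pi})$ to $|a_\pi(n)|^2$. The mechanism is not literally the factorization $L(s,\pi\times\widetilde{\pi}) = \zeta(s)L(s,\mathrm{Adj}\,\pi)$ followed by M\"obius inversion; that clean inversion is special to $\GL(2)$, where the correction factor is exactly $L(2s,\chi_0)^{-1}$ as in Corollary~\ref{cor:adj}. For $m \geq 3$ one instead has
\begin{equation*}
  \sum_{n \geq 1} \frac{|a_\pi(n)|^2}{n^s} = L(s,\pi\times\widetilde{\pi})\, H(s),
\end{equation*}
where $H(s)$ is a correction Euler product of the shape $\prod_p \left(1 + O(p^{-2s})\right)$ --- the $\GL(m)$ analogue of the series $T_1(s)$ in the lemma of Section~\ref{sec:individual_gl3}, which there required an explicit Satake-parameter computation even for $m=3$. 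Under the Ramanujan hypothesis $H(s)$ converges absolutely for $\Re s > \frac{1}{2}$, and since $\frac{m^2-1}{m^2+1} \geq \frac{3}{5} > \frac{1}{2}$ for $m \geq 2$, the convolution argument you describe does preserve the error term $O(X^{(m^2-1)/(m^2+1)+\epsilon})$ and the main term $c_\pi X$. You flag this issue yourself (``handle the auxiliary Euler factors''), so I regard it as an imprecision rather than a gap; with that Euler-product step written out, your proof is complete and matches what Meher and Murty actually do.
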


If we remove the assumption that the coefficients, $a_\pi(n)$, which in our notation correspond to $A(1,n)$,  satisfy the Ramanujan conjecture, the techniques of Theorem~\ref{mmty} do not improve upon Theorem~\ref{GL3} using presently known bounds.
Indeed, taking the notation for $\alpha, \beta$ and $\gamma$ as in~\cite{MM} and Theorem~\ref{theorem:main_sign_changes}, we know that we can take $\alpha = \frac{5}{14}$ and $\beta=\frac{3}{5}$, as shown in~\cite{MM2}, and so $\alpha+\beta = \frac{67}{70} > \frac{21}{23}$.
Note that using the analytic properties as in Theorem~\ref{theorem:signchanges_analytic} gives an improved result when compared to the original Meher-Murty axiomatization.

As in Theorem~\ref{mmty}, one might consider generalizing Theorem~\ref{GL3} to $\GL(m)$.
The main obstacle is deducing the region of convergence for the generalized form of $T_1(s)$ and $T_2(s)$ for $m > 3$.
It is not clear how this would compare to Theorem~\ref{mmty} as $m$ becomes large without adding assumptions.
The Ramanujan-Petersson conjecture alone is not enough to match Theorem~\ref{mmty} using Theorem~\ref{theorem:signchanges_analytic}, but if we also assume that the generalized analogues of the Dirichlet series $L(s,f)$, $D_1(s)$ and $D_2(s)$ all satisfy a Generalized Lindel\"{o}f Hypothesis on the central line, then we could take the interval for sign changes to be $[x,x+x^{\frac{1}{2}+\epsilon}]$ for $m\geq 2$.

\section{Sums of Coefficients of $\GL(2)$ Cusp Forms}
\label{sec:sums_gl2}

We now consider sequences of possibly complex coefficients $\{S_f^\nu(n)\}_{n \in \mathbb{N}}$ where $f$ is a weight zero Maass form or a holomorphic cusp form of full or half integer weight $k$ on a congruence subgroup $\Gamma \subseteq \SL_2(\mathbb{Z})$, possibly with nontrivial nebentypus.
To apply Theorem~\ref{theorem:main_sign_changes}, we begin by applying Theorem~\ref{theorem:CN} to Dirichlet series associated to $S_f^\nu$.

Here we take coefficients $a_f(n)$ to be ``unnormalized'' in that $a_f(n)=A_f(n)n^{\kappa(f)}$ with
\begin{equation*}
  \kappa(f) = \begin{cases}
    \frac{k-1}{2} & \text{if } f \; \text{is a holomorphic cusp form}, \\
    0 & \text{if } f \; \text{is a Maass form}
  \end{cases}
\end{equation*}
and with $A_f(n)$ as defined in Corollary~\ref{cor:adj}.
We let the Dirichlet series $L^\nu(s, f)$ be the series associated to the $\nu$-normalized coefficients
\begin{equation*}
  L^\nu(s, f) = \sum_{n \geq 1} \frac{a_f(n)n^{-\nu}}{n^s} = \sum_{n \geq 1} \frac{a_f(n)}{n^{s + \nu}},
\end{equation*}
which satisfies a functional equation of the shape
\begin{equation}
  \Lambda^\nu(s) := \frac{q(f)^{s/2}}{\pi^s}\Gamma(s + \nu)L^\nu(s, f) = \varepsilon(f) \Lambda^\nu(2 \kappa(f) + 1 - 2\nu - s, \widetilde{f}),
\end{equation}
where $\widetilde{f}$ represents the dual of $f$.
Note that $L^\nu(s,f) = L(s + \nu - \kappa(f), f)$, where this latter $L$-function is the standard automorphic $L$-function associated to $f$, with coefficients $A_f(n)$.
We choose $\delta = 2\kappa(f) + 1 - 2\nu$, $A = 1$, $w = \kappa(f) + 1 - \nu, w' = 0$, and $\eta = \frac{1}{6} + \frac{2}{3}\alpha(f)$, where $\alpha(f)$ is as in Theorem~\ref{theorem:individual_bounds}.
Then~\eqref{eq:CN_L1} guarantees that
\begin{equation}\label{eq:Sf_alpha}
  S_f^\nu(X) \ll X^{\kappa(f) + \frac{1}{3} + \frac{1}{3}\alpha(f) + \epsilon - \nu},
\end{equation}
for $\nu < \kappa(f)+\frac{1}{2}$.
As $2w - \delta - \frac{1}{A} = 0$, we also have that
\begin{equation} \label{mamabound}
  \sum_{n \leq X} \lvert S_f^\nu(n) \rvert^2 = c X^{2\kappa(f) + \frac{3}{2} - 2\nu} + O(X^{2\kappa(f) + 1 - 2\nu}\log^2 X)
\end{equation}
again for $\nu < \kappa(f)+\frac{1}{2}$.
The constant $c_{f,f}$ can be computed (for instance, see~\cite{hkldw1} for $f$ a holomorphic cusp form) to be
\begin{equation*}
  c_{f,f}= \frac{1}{4 \pi^2 (2\kappa + \frac{3}{2} - 2\nu)}\sum_{n \geq 1} \frac{\lvert a_f(n) \rvert^2}{n^{2\kappa + \frac{3}{2} - 2\nu}}.
\end{equation*}

 The asymptotic \eqref{mamabound} it is enough to prove the following proposition
\begin{proposition} Let $S_k(\Gamma, \chi)$ denote the space of holomorphic cusp forms of weight $k$ on the congruence subgroup $\Gamma$ with nebentypus $\chi$.
  By an abuse of notation, we will take the case where $k=0$ to be weight zero Maass forms.
  If $f\in S_k(\Gamma_1,\chi_1)$ and $g\in S_k(\Gamma_2,\chi_2)$, where $\chi_1$ and $\chi_2$ are respectively trivial unless $\Gamma_1=\Gamma_0(n_1)$ and $\Gamma_2=\Gamma_0(n_2)$, again respectively, then
\begin{equation}\label{babybound1}
  \sum_{n \leq X} S_f^\nu(n)\overline{S_g^\nu(n)}  = c_{f,g} X^{2\kappa(f) + \frac{3}{2} - 2\nu} + O(X^{2\kappa(f) + 1 - 2\nu}\log^2 X)
  \end{equation}
  where
  \[
  c_{f,g} = \frac{1}{4 \pi^2 (2\kappa + \frac{3}{2} - 2\nu)}\sum_{n \geq 1} \frac{a_f(n)\overline{a_g(n)}}{n^{2\kappa + \frac{3}{2} - 2\nu}},
  \]
  and
  \begin{equation}\label{babybound2}
  \sum_{n \leq X} S_f^\nu(n)S_g^\nu(n)  = c_{f,\overline{g}} X^{2\kappa(f) + \frac{3}{2} - 2\nu} + O(X^{2\kappa(f) + 1 - 2\nu}\log^2 X)
  \end{equation}
    where
  \[
  c_{f,\overline{g}} = \frac{1}{4 \pi^2 (2\kappa + \frac{3}{2} - 2\nu)}\sum_{n \geq 1} \frac{a_f(n)a_g(n)}{n^{2\kappa + \frac{3}{2} - 2\nu}}.
  \]
  Here we take $\kappa(f)=\kappa(g) = \frac{k-1}{2}$ if $k\neq 0$ and $\kappa(f)=0$ otherwise.

\end{proposition}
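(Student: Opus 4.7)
The plan is to reduce both asymptotics to the diagonal bound \eqref{mamabound} via a polarization identity. For \eqref{babybound1} we use
\[
x\overline{y} = \tfrac{1}{4}\bigl(|x+y|^2 - |x-y|^2\bigr) + \tfrac{i}{4}\bigl(|x-iy|^2 - |x+iy|^2\bigr),
\]
applied to $x = S_f^\nu(n)$ and $y = S_g^\nu(n)$. Since the partial-sum operator is complex-linear, $S_{c_1 f + c_2 g}^\nu(n) = c_1 S_f^\nu(n) + c_2 S_g^\nu(n)$ for any $c_1, c_2 \in \mathbb{C}$, so it suffices to establish a diagonal asymptotic
\[
\sum_{n \leq X} |S_h^\nu(n)|^2 = c_{h,h} X^{2\kappa(f) + \frac{3}{2} - 2\nu} + O(X^{2\kappa(f) + 1 - 2\nu}\log^2 X)
\]
for each $h$ in the finite list $f \pm g$, $f \pm ig$, and then to sum these four asymptotics with the appropriate scalar weights.

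To verify the diagonal asymptotic for a general such $h$, first replace $\Gamma_1,\Gamma_2$ by $\Gamma := \Gamma_1 \cap \Gamma_2$ so that $f$ and $g$ share a common level. The Dirichlet series $L^\nu(s, h)$ is then a $\mathbb{C}$-linear combination of $L^\nu(s,f)$ and $L^\nu(s,g)$, which carry a single common gamma factor (since the weight $k$ and the shift $\nu$ match). Summing the individual functional equations yields
\[
G(s) L^\nu(s, h) = G(\delta - s)\, \widetilde{L}(\delta - s),
\]
where $\widetilde{L}$ has coefficients given by a linear combination of those of $L^\nu(\,\cdot\,, \widetilde{f})$ and $L^\nu(\,\cdot\,, \widetilde{g})$, each weighted by its own $\varepsilon$-factor. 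The coefficients of $\widetilde{L}$ inherit the bound $\sum_{n \leq X} |b(n)|^2 \ll X^{2w-1}$ by the triangle inequality, with $w$ as in the derivation of \eqref{mamabound}. Hence the second assertion of Theorem~\ref{theorem:CN} applies verbatim with the same parameters $A=1$, $\delta = 2\kappa(f)+1-2\nu$, $w = \kappa(f)+1-\nu$, $w'=0$, and the leading constant can be read off as $c_{h,h} = \frac{1}{4\pi^2(2\kappa + 3/2 - 2\nu)} \sum_n |a_h(n)|^2/n^{2\kappa + 3/2 - 2\nu}$.

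Assembling the four asymptotics via polarization, the pure diagonal contributions $|a_f(n)|^2$ and $|a_g(n)|^2$ cancel and the cross terms reassemble into $\sum_n a_f(n)\overline{a_g(n)}/n^{2\kappa + 3/2 - 2\nu}$, yielding precisely the stated constant $c_{f,g}$; since all four error terms have the same order $X^{2\kappa + 1 - 2\nu}\log^2 X$, their sum is absorbed. This establishes \eqref{babybound1}. For \eqref{babybound2}, replace $g$ by its complex conjugate modular form $\overline{g}$, a cusp form of weight $k$ on $\Gamma_2$ with conjugate nebentypus and Fourier coefficients $\overline{a_g(n)}$. Since $\overline{S_{\overline{g}}^\nu(n)} = S_g^\nu(n)$ (the shift $\nu$ is real), applying \eqref{babybound1} to the pair $(f,\overline{g})$ gives the asymptotic for $\sum_n S_f^\nu(n) S_g^\nu(n)$ with constant $\frac{1}{4\pi^2(2\kappa + 3/2 - 2\nu)}\sum_n a_f(n)\overline{a_{\overline{g}}(n)}/n^{2\kappa+3/2-2\nu} = c_{f,\overline{g}}$, as defined.

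The principal subtlety is verifying that the variance portion of Theorem~\ref{theorem:CN} tolerates the non-eigenform combinations $h$ together with the possibly distinct $\varepsilon$-factors appearing on the dual side. Neither feature enters the hypotheses: only the shape of the functional equation and the $L^2$-bound on the dual coefficients are needed, and both survive taking linear combinations. In particular, no Hecke eigenform structure is invoked in extracting the leading constant $c_{h,h}$, so the argument goes through uniformly for every choice of $h$ arising from the polarization identity.
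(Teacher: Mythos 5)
Your argument is in substance the paper's own: reduce to the diagonal asymptotic \eqref{mamabound} by polarization over linear combinations of $f$ and $g$ placed in a common space $S_k(\Gamma(N),\chi_0)$, and obtain \eqref{babybound2} from \eqref{babybound1} by replacing $g$ with a form whose coefficients are $\overline{a_g(n)}$. The paper uses only $h_1=f+g$ and $h_2=f+ig$ together with the two diagonal terms rather than your four-term identity, but that is cosmetic. Two small slips should be repaired. First, your polarization identity is written with the wrong sign in the imaginary part: as stated it evaluates to $\overline{x}\,y$, not $x\overline{y}$; the correct identity is
\begin{equation*}
  x\overline{y} \;=\; \tfrac{1}{4}\bigl(\lvert x+y\rvert^2 - \lvert x-y\rvert^2\bigr) \;+\; \tfrac{i}{4}\bigl(\lvert x+iy\rvert^2 - \lvert x-iy\rvert^2\bigr),
\end{equation*}
and carrying your version through would produce the conjugate of \eqref{babybound1}. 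Second, ``the complex conjugate modular form $\overline{g}$'' is not a holomorphic cusp form (the pointwise conjugate of a holomorphic function is antiholomorphic); the correct object with Fourier coefficients $\overline{a_g(n)}$ is $z \mapsto \overline{g(-\overline{z})}$, i.e.\ $\overline{T_{-1}g}$ with $T_{-1}$ the reflection operator, which is what the paper invokes and which again lies in $S_k(\Gamma(N),\chi_0)$. With these corrections your derivation of the constants $c_{f,g}$ and $c_{f,\overline{g}}$ and the absorption of the error terms is fine, and your remark that Theorem~\ref{theorem:CN} requires no Hecke eigenform structure for the combinations $h$ is a worthwhile point that the paper leaves implicit.
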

\begin{proof}
For sufficiently large $N$, $\Gamma(N) \subseteq \Gamma_1 \cap \Gamma_2$, so we can take that $f,g \in S_k(\Gamma(N),\chi_0)$ where $\chi_0$ denotes the trivial character.
Thus $h_1, h_2 \in S_k(\Gamma(N),\chi_0)$ where $h_1=f+g$ and $h_2=f+ig$ and so we have that
\[
|S^\nu_{h_1}(n)|^2 = |S^\nu_f(n)|^2+|S^\nu_g(n)|^2 +2\Re\left( S^\nu_f(n)\overline{S^\nu_g(n)}\right)
\]
and
\[
 |S^\nu_{h_2}(n)|^2 = |S^\nu_f(n)|^2+|S^\nu_g(n)|^2 +2\Im\left( S^\nu_f(n)\overline{S^\nu_g(n)}\right).
\]
Thus
\[
S^\nu_f(n)\overline{S^\nu_g(n)} = \frac{1}{2} \left( |S^\nu_{h_1}(n)|^2+ i |S^\nu_{h_2}(n)|^2-(1+i)\left( |S^\nu_f(n)|^2+|S^\nu_g(n)|^2 \right) \right).
\]
The asymptotic of each of the summands above is given by \eqref{mamabound} and so \eqref{babybound1} follows.

Let $T_{-1}$ denote the reflection Hecke operator, so that $T_{-1}F(x+iy) = F(-x+iy)$.
To get \eqref{babybound2} we note that it is easy to see that if $g\in S_k(\Gamma(N),\chi_0)$ then $\overline{T_{-1}g} \in S_k(\Gamma(N),\chi_0)$ and $S_{\overline{T_{-1}g}}(n) = \overline{S_g(n)}$.
So \eqref{babybound2} is just a special case of \eqref{babybound1}.
\end{proof}

In Section~\ref{sec:proof_prop_Sf_L1}, we use similar methods as in~\cite{hkldw1} to prove the following proposition.

\begin{proposition}\label{prop:Sf_L1}
  Let $f$  and $S_f^\nu$ be defined as above.
  Then for $\nu > 0$ and $\epsilon > 0$, we have
  \begin{equation*}
    \sum_{n \leq X} S_f^\nu(n) \ll X^{\kappa(f) + \frac{5}{6} + \frac{1}{3}\alpha(f) + \epsilon - \nu} + X.
  \end{equation*}
\end{proposition}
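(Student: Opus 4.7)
The strategy is to convert the partial sum of $S_f^\nu$ into a first Riesz mean of the underlying coefficients, which admits a clean Mellin--Barnes representation. Interchanging the order of summation I would first record the identity
\begin{equation*}
  \sum_{n \leq X} S_f^\nu(n) \;=\; \sum_{m \leq X} \frac{a_f(m)}{m^\nu}\bigl(\lfloor X\rfloor + 1 - m\bigr) \;=\; R_1(X) + (1 - \{X\})\, S_f^\nu(X),
\end{equation*}
where $R_1(X) := \sum_{m \leq X} a_f(m) m^{-\nu}(X - m)$. The $S_f^\nu(X)$ contribution is already controlled by \eqref{eq:Sf_alpha} well within the claimed bound, so the task reduces to bounding $R_1(X)$.

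For $c$ inside the half-plane of absolute convergence of $L^\nu(s, f) = L(s + \nu - \kappa(f), f)$, Mellin inversion gives
\begin{equation*}
  R_1(X) \;=\; \frac{1}{2\pi i}\int_{(c)} L^\nu(s, f)\, \frac{X^{s+1}}{s(s+1)}\, ds.
\end{equation*}
Since $f$ is cuspidal, $L^\nu$ is entire, so when I shift the contour to the left only the kernel poles at $s = 0$ and $s = -1$ contribute. The residue at $s = 0$ equals $L^\nu(0, f)\cdot X = L(\nu - \kappa(f), f)\cdot X$, which is $O(X)$ and accounts for the ``$+X$'' in the statement; the residue at $s = -1$ is $-L^\nu(-1, f) = O(1)$.

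To estimate the shifted contour integral I would reprise the Chandrasekharan--Narasimhan-style smoothing argument that produced \eqref{eq:Sf_alpha}, now applied to the Riesz kernel $X^{s+1}/(s(s+1))$ rather than the Perron kernel $X^s/s$: smooth the sharp weight by a bump supported on an interval of length $X^{1-\eta'}$ near $X$, bound the smoothed contour integral using the functional equation $\Lambda^\nu(s) = \varepsilon(f)\Lambda^\nu(2\kappa(f)+1-2\nu-s, \widetilde{f})$ together with Stirling's formula, and control the smoothing discrepancy using $|A_f(n)| \ll n^{\alpha(f)+\epsilon}$ from Theorem~\ref{theorem:individual_bounds}. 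With the same Chandrasekharan--Narasimhan parameters $A=1$, $\delta=2\kappa(f)+1-2\nu$, $w=\kappa(f)+1-\nu$ (via Corollary~\ref{cor:adj}) as in the derivation of \eqref{eq:Sf_alpha}, the Riesz weighting enlarges the final exponent by exactly $\tfrac{1}{2}$ relative to \eqref{eq:Sf_alpha}, and the same optimal balance $\eta = \tfrac{1}{6} + \tfrac{2}{3}\alpha(f)$ then produces the exponent $\kappa(f) + \tfrac{5}{6} + \tfrac{1}{3}\alpha(f) + \epsilon - \nu$.

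The main obstacle will be verifying that the Riesz weighting really costs only an extra $X^{1/2}$ rather than the trivially expected full $X$. This amounts to confirming two things: that the extra factor $1/(s+1)$ in the Mellin kernel provides enough additional decay along the shifted contour to offset most of the $X$ in the numerator after the functional-equation/Stirling estimate, and that the weighted boundary sum $\sum_{X < n \leq X+X^{1/2-\eta}}(X-n)|c(n)|$ produced by the smoothing contributes just $X^{1/2}$ more than its unweighted analog in the proof of \eqref{eq:Sf_alpha}. Once that bookkeeping is in place, combining the residue $O(X)$ with the contour-integral error yields the proposition.
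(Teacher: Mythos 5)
Your argument is essentially correct, but it takes a genuinely different route from the paper. You pass to the order-one Riesz mean $R_1(X)=\sum_{m\le X}a_f(m)m^{-\nu}(X-m)$ and attack it with the Perron kernel $X^{s+1}/(s(s+1))$, the functional equation of $L^\nu(s,f)$, and Stirling; the paper instead works with the Dirichlet series $L(s,S_f^\nu)=\sum_n S_f^\nu(n)n^{-s}$ of the partial sums themselves, decouples it via the Mellin--Barnes identity \eqref{mbarnes} (picking up a $\zeta(w)$ factor), and integrates against a smooth cutoff $v_Y$ supported on $[1,1+\tfrac1Y]$. In the paper the $O(X)$ term arises from the $w=1$ pole of $\zeta$ followed by the $s=0$ pole, and the $X^{1/2}$ loss comes from the smoothing window: the discrepancy is $(X/Y)\sup_{X<n\le X+X/Y}|S_f^\nu(n)|$, bounded by \eqref{eq:Sf_alpha}, while the shifted contour integrals force $Y\ll X^{1/2-\epsilon}$. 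In your version the $O(X)$ term is the kernel residue $L^\nu(0,f)X$ and the $X^{1/2}$ loss comes from the Riesz weighting itself. The ``main obstacle'' you flag does check out, and in fact your route gives slightly more: with $A=1$, $\delta=2\kappa(f)+1-2\nu$, $w=\kappa(f)+1-\nu$ one has $\rho=1>2Aw-A\delta-\tfrac12=\tfrac12$, so the dual sum in the Voronoi/Chandrasekharan--Narasimhan expansion of the order-one Riesz mean converges absolutely and yields $R_1(X)=O(X)+O(X^{\delta/2+1/4})=O(X)+O(X^{\kappa(f)+3/4-\nu})$, with no $\eta$-balancing and no dependence on $\alpha(f)$; this is stronger than the stated exponent $\kappa(f)+\tfrac56+\tfrac13\alpha(f)-\nu$. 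Two small cautions: Theorem~\ref{theorem:CN} as quoted in the paper is the sharp-sum ($\rho=0$) case, so you must invoke the Riesz-mean version of Chandrasekharan--Narasimhan rather than reusing the displayed statement verbatim; and your residue computation relies on $L^\nu(s,f)$ being entire, which holds here because $f$ is cuspidal but should be said explicitly.
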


By applying Theorem~\ref{theorem:main_sign_changes}, we can determine the following generalized sign changes.
\begin{theorem}
 Let $f$ be a weight $0$ Maass form or a holomorphic cusp form of full or half integer weight $k$ on a congruence subgroup $\Gamma \subseteq \SL_2(\mathbb{Z})$, possibly with nontrivial nebentypus if $\Gamma=\Gamma_0(n_0)$.
 Suppose that $0 \leq \nu < \kappa(f) + \frac{1}{6} - \frac{2 \alpha(f)}{3}$.
 If $\Re a_f(n) \neq 0$ (resp. $\Im a_f(n) \neq 0$) for at least one coefficient $a_f(n)$, then the sequence $\{\Re S_f(n) \}_{n \in \mathbb{N}}$ (resp. $\{\Im S_f(n) \}_{n \in \mathbb{N}}$) has at least one sign change for some $n \in (X, X + X^{r(\nu)}]$ for $X \gg 1$, where
  \begin{equation*}
    r(\nu) = \begin{cases}
      \frac{2}{3} + \frac{2\alpha(f)}{3} + \epsilon & \text{if } \nu < \kappa(f) + \frac{\alpha(f)}{3} - \frac{1}{6}, \\
      \frac{2}{3} + \frac{2\alpha(f)}{3} + \Delta + \epsilon & \text{if } \nu = \kappa(f) + \frac{\alpha(f)}{3} - \frac{1}{6} + \Delta, \; 0 \leq \Delta < \frac{1}{3} - \frac{2\alpha(f)}{3}.
    \end{cases}
  \end{equation*}
\end{theorem}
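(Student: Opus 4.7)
The plan is to apply Theorem~\ref{theorem:main_sign_changes} to the sequence $\{S_f^\nu(n)\}_{n \in \mathbb{N}}$, so that its four hypotheses translate to assertions already proved in this section. Hypothesis~(1) is~\eqref{eq:Sf_alpha} with $\alpha = \kappa(f) + \frac{1}{3} + \frac{\alpha(f)}{3} - \nu + \epsilon$; hypothesis~(2) is Proposition~\ref{prop:Sf_L1}, giving $\beta = \max\bigl(\kappa(f) + \frac{5}{6} + \frac{\alpha(f)}{3} - \nu,\,1\bigr) + \epsilon$; and hypotheses~(3) and~(4) are~\eqref{mamabound} and~\eqref{babybound2} applied with $g = f$, which yield $\gamma_1 = \gamma_2 = 2\kappa(f) + \frac{3}{2} - 2\nu$, $\eta_1 = \eta_2 = 2\kappa(f) + 1 - 2\nu + \epsilon$, $c_1 = c_{f,f}$, and $c_2 = c_{f,\overline{f}}$.

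Next I would verify the nondegeneracy required by the second half of Theorem~\ref{theorem:main_sign_changes}. Using the identities $(\Re z)^2 = \tfrac{1}{2}(|z|^2 + \Re z^2)$ and $(\Im z)^2 = \tfrac{1}{2}(|z|^2 - \Re z^2)$ inside the defining Dirichlet series of $c_{f,f}$ and $c_{f,\overline{f}}$, one sees that $c_{f,f} + \Re c_{f,\overline{f}}$ is a positive constant multiple of $\sum_{n \geq 1} (\Re a_f(n))^2 / n^{2\kappa(f) + 3/2 - 2\nu}$, while $c_{f,f} - \Re c_{f,\overline{f}}$ is the same positive multiple of the analogous $\Im$-series. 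Consequently the inequalities $\Re(c_2) X^{\gamma_2} \neq -c_1 X^{\gamma_1}$ and $\Re(c_2) X^{\gamma_2} \neq c_1 X^{\gamma_1}$ are exactly the assumptions ``some $\Re a_f(n) \neq 0$'' and ``some $\Im a_f(n) \neq 0$'' appearing in the statement.

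The exponent $r(\nu)$ will then be extracted from~\eqref{rbound2}. Because $\gamma_1 = \gamma_2$, the quantity $\max(\gamma_1,\gamma_2) - 1$ equals $2\kappa(f) + \tfrac{1}{2} - 2\nu$, and $\eta_i - (\gamma_i - 1) = \tfrac{1}{2} + \epsilon$, which will be dominated by $\alpha + \beta - (\gamma_1 - 1)$ in either case. In Case~1 (when $\nu < \kappa(f) + \tfrac{\alpha(f)}{3} - \tfrac{1}{6}$), the $X^{\kappa(f) + 5/6 + \alpha(f)/3 - \nu}$ summand in $\beta$ beats $X$, and a direct substitution yields $\alpha + \beta - (\gamma_1 - 1) = \tfrac{2}{3} + \tfrac{2\alpha(f)}{3} + 2\epsilon$. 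In Case~2 (with $\nu = \kappa(f) + \tfrac{\alpha(f)}{3} - \tfrac{1}{6} + \Delta$), the $X$ summand dominates so $\beta = 1 + \epsilon$, and substitution gives $\alpha + \beta - (\gamma_1 - 1) = \tfrac{2}{3} + \tfrac{2\alpha(f)}{3} + \Delta + 2\epsilon$. Either way the stated $r(\nu)$ is admissible after absorbing $2\epsilon$ into $\epsilon$.

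The main obstacle I anticipate is verifying the auxiliary assumption $\alpha + \beta < \max(\gamma_1,\gamma_2)$ demanded by Theorem~\ref{theorem:main_sign_changes}, as this is what pins down the allowable range of $\nu$. In Case~1 this reduces to $\alpha(f) < \tfrac{1}{2}$, which holds by Theorem~\ref{theorem:individual_bounds}; in Case~2 it reduces precisely to $\Delta < \tfrac{1}{3} - \tfrac{2\alpha(f)}{3}$, matching the range of $\Delta$ allowed in the statement. Once these bookkeeping constraints are in hand, Theorem~\ref{theorem:main_sign_changes} immediately produces the desired sign changes of $\Re S_f^\nu(n)$ and $\Im S_f^\nu(n)$ in $(X, X + X^{r(\nu)}]$ for $X \gg 1$.
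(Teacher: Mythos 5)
Your proposal is correct and follows exactly the route the paper intends: the paper gives no written proof beyond ``apply Theorem~\ref{theorem:main_sign_changes} to $\{S_f^\nu(n)\}$ with the inputs \eqref{eq:Sf_alpha}, Proposition~\ref{prop:Sf_L1}, \eqref{mamabound}, and \eqref{babybound2},'' and your parameter identifications, the computation of $r(\nu)$ in both cases, the verification of $\alpha+\beta<\gamma_1$, and the translation of the nondegeneracy conditions into ``some $\Re a_f(n)\neq 0$'' (resp.\ $\Im$) via $c_{f,f}\pm\Re c_{f,\overline f}$ all check out.
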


As an immediate corollary, we get another proof of generalized sign changes of the coefficients $\{a_f(n)\}_{n \in \mathbb{N}}$, although for a larger interval.

\begin{remark}
  In this theorem, $\nu$ can be taken to be slightly larger than $\kappa(f)$.
  For instance, in the case of full-integer weight holomorphic cusp forms, we can take $\nu = \frac{k-1}{2} + \frac{1}{6} - \epsilon$ for any $\epsilon > 0$, so that $S_f^\nu$ is a sum of \emph{overnormalized} coefficients.
  Regular sign changes of sums of overnormalized coefficients suggests a certain regularity of the sign changes of individual coefficients $a_f(n)$.
\end{remark}

\begin{remark}
  This proof and methodology applies also to any degree $2$ $L$-function with a functional equation and which satisfies the Ramanujan-Petersson conjecture on average.
\end{remark}

\section{Analytic Bounds}
\label{sec:analytic_bounds}

In this section, we prove bounds for the magnitude of $\sum_{n \leq X} S_f^\nu(n)$ necessary for use in generalized sign change theorems, as applied to the sequence $\{S_f^\nu(n)\}_{n \in \mathbb{N}}$.

For $X, Y > 0$, let $v_Y(X)$ denote a smooth non-negative function with maximum value $1$ satisfying
\begin{enumerate}
  \item $v_Y(X) = 1$ for $X \leq 1$,
  \item $v_Y(X) = 0$ for $X \geq 1 + \frac{1}{Y}$.
\end{enumerate}
Let $V(s)$ denote the Mellin transform of $v_Y(X)$, given by
\begin{equation*}
  V(s)=\int_0^\infty t^s v_Y(t) \frac{dt}{t}.
\end{equation*}
when $\Re(s) > 0$.
Through repeated applications of integration by parts, one can show that $V(s)$ satisfies the following properties:
\begin{enumerate}
  \item $V(s) = \frac{1}{s} + O_s(\frac{1}{Y})$.
  \item $V(s) = -\frac{1}{s}\int_1^{1 + \frac{1}{Y}}v'(t)t^s dt$.
  \item For all positive integers $m$, and with $s$ constrained to within a vertical strip where $|s-1|>\epsilon$, we have
  \begin{equation} \label{vbound}
  V(s) \ll_\epsilon \frac{1}{Y}\left(\frac{Y}{1 + \lvert s \rvert}\right)^m.
  \end{equation}
\end{enumerate}
Property $(3)$ above can be extended to real $m > 1$ through the Phragm\'en-Lindel\"{o}f principle.

\begin{proof}[Proof of Proposition~\ref{prop:Sf_L1}]
\label{sec:proof_prop_Sf_L1}

 We are led to investigate
\begin{equation*}
  L(s, S_f^\nu) := \sum_{n \geq 1}\frac{S_f^\nu(n)}{n^s}.
\end{equation*}
Similar to the methods in~\cite{hkldw1}, we decouple the variables through the use of a Mellin-Barnes integral,
\begin{equation}\label{mbarnes}
  \begin{split}
    L(s, S_f^\nu) 
    &= \sum_{n \geq 1}\sum_{h \geq 0} \frac{a_f(n)}{n^\nu (n+h)^s} \\
    &= L^\nu(s,f) + \frac{1}{2\pi i} \int_{(\gamma)} L^\nu(s -w,f) \zeta(w)\frac{\Gamma(w)\Gamma(s - w)}{\Gamma(s)}dw,
  \end{split}
\end{equation}
where $\Re(s)>\gamma>1$.

On the one hand, supposing $X,Y \gg 1$,
\begin{align*}
  \frac{1}{2\pi i} \int_{(\sigma)} L(s, S_f^\nu) V(s) X^s ds &= \sum_{n \geq 1} S_f^\nu(n) v(\tfrac{n}{X}) \\
  &= \sum_{n \leq X} S_f^\nu(n) + \sum_{X < n \leq X + X/Y} S_f^\nu(n) v(\tfrac{n}{X}) \\
  &= \sum_{n \leq X} S_f^\nu(n) + O\bigg( \frac{X}{Y} X^{\kappa(f) + \frac{1}{3} + \frac{1}{3}\alpha(f) + \epsilon - \nu}\bigg),
\end{align*}
where in the last bound we have used~\eqref{eq:Sf_alpha}.

On the other hand, from~\eqref{mbarnes} we have that
\begin{equation*}
  \begin{split}
    \frac{1}{2\pi i} &\int_{(\sigma)} L(s, S_f^\nu) V(s) X^s ds = \\
    &= \frac{1}{2\pi i} \int_{(\sigma)} L^\nu(s) V(s) X^s ds \\
    &\quad + \left(\frac{1}{2\pi i}\right)^2 \iint\limits_{(\sigma)(\gamma)} L^\nu(s-w) \zeta(w) \frac{\Gamma(w) \Gamma(s - w)}{\Gamma(s)} V(s) X^s dw ds
  \end{split}
\end{equation*}
for $\sigma > \gamma > 1$.
We bound the contribution from the first term by
\begin{equation*}
  \begin{split}
    \frac{1}{2\pi i} \int_{(\sigma)} L^\nu(s)V(s)X^sds &= \sum_{n\geq 1} \frac{a_f(n)}{n^\nu}v\left(\tfrac{n}{X} \right) \\
    & \ll X^{\kappa(f)+\frac{1}{3} + \frac{1}{3}\alpha(f) + \epsilon -\nu}+ O \left(\frac{X}{Y}X^{\kappa(f) + \alpha(f) -\nu} \right).
  \end{split}
\end{equation*}

For the double integral term, we perform a change of variables $s \mapsto s + w$ and shift the $s$ line of integration to $(\epsilon)$ for some small $\epsilon > 0$.
Then we have
\begin{align*}
  & \left(\frac{1}{2\pi i}\right)^2 \iint\limits_{(\sigma)(\gamma)}L^\nu(s-w)\zeta(w)\frac{\Gamma(w) \Gamma(s - w)}{\Gamma(s)}V(s) X^s dw ds \\
  &=
  \left(\frac{1}{2\pi i}\right)^2 \iint\limits_{(\epsilon)(\gamma)}L^\nu(s)\zeta(w)\frac{\Gamma(w) \Gamma(s )}{\Gamma(s+w)}V(s+w) X^{s+w} dw ds.
\end{align*}
Shifting the $w$ line of integration left past the pole at $w = 1$ to $(\epsilon)$ gives that this becomes
\begin{align}
  & \left(\frac{1}{2\pi i}\right) \int\limits_{(\epsilon)}L^\nu(s)\frac{V(1+s)}{s} X^{1+s}ds \label{line:res1}\\
  &\quad+ \left(\frac{1}{2\pi i}\right)^2 \iint\limits_{(\epsilon)(\epsilon)}L^\nu(s)\zeta(w)\frac{\Gamma(w) \Gamma(s )}{\Gamma(s+w)}V(s+w) X^{s+w} dw ds. \label{line:rem1}
\end{align}

We first consider the residual term~\eqref{line:res1}.
Shifting the line of integration for $s$ left past the pole at $s = 0$ gives the residue $L^\nu(0) V(1) X = O(X)$, and there are no additional poles.
By Stirling's approximation and the functional equation of $L(s,f)$, we have that ${L(-\sigma+it,f)} \ll (1+|t|)^{1+2\sigma}$ for $\sigma >0$, and so $L^\nu(-\sigma+it) \ll (1+|t|)^{2\kappa(f) + \frac{3}{2}-2\nu+2\sigma}$ for sufficiently large $\sigma>0$.
Recall that \eqref{vbound} holds for any $m \geq 1$.
Then by shifting the line of integration for $s$ to $(-\sigma)$ so that $\sigma \gg 1$, and taking $m = 2\kappa(f) + \frac{3}{2}-2\nu+2\sigma+\epsilon$, we have that~\eqref{line:res1} converges absolutely and is bounded by\begin{equation*}
  O(X^{1 - \sigma}Y^{2\sigma+2\kappa(f) + \frac{1}{2}-2\nu+\epsilon}).
\end{equation*}
If $Y < X^{\frac{1}{2} - \epsilon}$, then the remaining integral term can be made arbitrarily small as $X \to \infty$ by taking $\sigma$ to be sufficiently large.
Then~\eqref{line:res1} can be bounded by the pole at $s = 0$, which has size $O(X)$.

We now return our attention to~\eqref{line:rem1}.
Shifting $s$ to the left picks up poles due to the $\Gamma(s)$ term in the integrand.
The pole at $s = 0$ has residue
\begin{equation*}
  \frac{1}{2\pi i} \int_{(\epsilon)} L^\nu(0)\zeta(w) V(w) X^w \ dw.
\end{equation*}
For fixed $\sigma > 0$, we have that $\zeta(-\sigma + iu) \ll (1+|u|)^{\frac{1}{2}+\sigma}$.
Shifting the $w$ line of integration to $(-\sigma)$ with $\sigma \gg 1$ with $m = \sigma + 2$, we see that this shifted integral converges absolutely and is bounded by $O(X^{-\sigma} Y^{\sigma+1})$, which can be made arbitrarily small provided $Y < X^{\frac{1}{2} - \epsilon}$.
The pole at $w = 0$ due to $V(w)$ contributes a term bounded by $O(1)$ independent of $X$ and $Y$.

The poles at $s = -n$ for $n \in \mathbb{Z}_{> 0}$ have residues bounded by the residue at $s = 0$.
In particular, the residue at $s = -n$ is
\begin{equation*}
  \frac{1}{2\pi i} \int_{(\epsilon)} L^\nu(-n)\zeta(w) \frac{V(w-n)}{(w+n-1)\cdots(w+1)(w)} X^{w-n} \ dw \ll X^{\epsilon-n}.
\end{equation*}
We note that the implicit constant depends on $n$, and might grow extremely large in $n$.
For this reason all of our shifts will be independent of $X$, $Y$.
For now, we shift our line of integration in $s$ to $(-M)$ for some $M > 0$ to be specified later.

Letting $s = \sigma + it$ and $w = \epsilon + iu$, we consider the shifted integral using Stirling's approximation and see that for sufficiently negative $\sigma$, \begin{align*}
  & \left( \frac{1}{2\pi i} \right)^2 \iint\limits_{(\sigma)(\epsilon)} L^\nu(s) \zeta(w) \frac{\Gamma(w) \Gamma(s )}{\Gamma(s+w)} V(s+w) X^{s+w} dw ds  \\
  & \ll \int\limits_{-\infty}^\infty \int\limits_{-\infty}^\infty  \lvert L(\sigma+\nu-\kappa(f)+it,f) \zeta(\epsilon+iu)\rvert \frac{(1+\lvert u \rvert )^{\epsilon-\frac{1}{2}}(1+ \lvert t \rvert )^{\sigma-\frac{1}{2}}}{(1+ \lvert t+u \rvert )^{\sigma+\epsilon-\frac{1}{2}}} \\
  & \quad \times e^{-\frac{\pi}{2}( \lvert t \rvert + \lvert u \rvert - \lvert t+u \rvert )} \lvert V(s+w) \rvert  X^{\sigma+\epsilon} dudt \\
  & \ll \!\int\limits_{-\infty}^\infty \!\int\limits_{-\infty}^\infty \frac{(1+ \lvert t \rvert )^{2\kappa(f) + 1-2\nu-\sigma}(1+ \lvert u \rvert )^{\frac{\epsilon}{2}}}{(1+ \lvert t+u \rvert )^{\sigma+\epsilon-\frac{1}{2}+m}e^{\frac{\pi}{2}( \lvert t \rvert + \lvert u \rvert - \lvert t+u \rvert )}} Y^{m-1}X^{\sigma+\epsilon} du dt.
\end{align*}
The largest contribution from this integral occurs when $t$ and $u$ have the same sign, when there is no exponential damping.
For these $t$ and $u$, $(1 + \lvert t+u \rvert)$ is at least as large as $(1 + \lvert t \rvert)$ and $(1+\lvert u \rvert)$ and it is clear that the integral converges for sufficiently large $m$.
Taking $\sigma$  to $-M$ and choosing $m = 2\kappa(f) - 2\nu + 2M + 2$, we bound this contribution absolutely by
\begin{equation}\label{eq:Sf_doubleintegral_bound}
  O(X^{-M + \epsilon} Y^{2\kappa(f) - 2\nu + 2M + 2}).
\end{equation}
If $Y \ll X^{\frac{1}{2} - \epsilon}$, we may choose $M$ large enough so that~\eqref{eq:Sf_doubleintegral_bound} is smaller than $O(X)$.

Combining the error terms, we have
\begin{equation*}
  \sum_{n \leq X} S_f^\nu(n) = O\bigg(\frac{X}{Y}X^{\kappa(f)+\frac{1}{3} + \frac{1}{3} \alpha(f) + \epsilon - \nu} \bigg)+O(X).
\end{equation*}
Choosing $Y = X^{\frac{1}{2} - \epsilon}$ gives the proposition.
\end{proof}

\vspace{20 mm}
\bibliographystyle{elsarticle-harv}
\bibliography{signchangebib}

\end{document}